\documentclass[12pt]{amsart}
\copyrightinfo{2012}{Luke Oeding}

\usepackage{amsmath,amsthm,amsfonts,amscd,amssymb,mathrsfs,hyperref}

\usepackage{color}
\newtheorem{theorem}{Theorem}[section]

\newtheorem{lemma}[theorem]{Lemma}

\newtheorem{prop}[theorem]{Proposition}

\newtheorem{cor}[theorem]{Corollary}

\def\rig#1{\smash{ \mathop{\longrightarrow}
 \limits^{#1}}}

\def\O{{\mathcal O}}

\def\M{{\mathcal M}}


\newcommand{\SL}{\operatorname{SL}}
\newcommand{\GL}{\operatorname{GL}}

\newcommand{\V}{{\mathcal V}}

\newcommand{\PP}{\mathbb{P}}
\newcommand{\CC}{\mathbb{C}}
\newcommand{\ZZ}{\mathbb{Z}}

\newcommand{\Seg}{\operatorname{Seg}}

\theoremstyle{definition}

\theoremstyle{remark}
\newtheorem{remark}[theorem]{Remark}

\newcommand{\otdot}{\otimes\cdots\otimes}

\newcommand{\Chow}{\text{Chow}}

\begin{document}

\author{Luke Oeding }
\thanks{This material is based upon work supported by the National Science Foundation under Award No. 0853000: International Research Fellowship Program (IRFP), while the author was in residence primarily at the University of Florence, Italy.  The author also gratefully acknowledges partial support from the Mittag-Leffler institute.}

\address{Dipartimento di Matematica ``U. Dini'' \\
Universit\'a degli Studi di Firenze \\
Firenze, Italy
}
\address{Present Address: Department of Mathematics,
University of California Berkeley,
Berkeley, CA, USA
}
\email{oeding@math.berkeley.edu}

\date{\today}

\title{Hyperdeterminants of polynomials}
\begin{abstract}
The hyperdeterminant of a polynomial (interpreted as a symmetric tensor) factors into several irreducible factors with multiplicities. Using geometric techniques these factors are identified along with their degrees and their multiplicities. The analogous decomposition for the $\mu$-discriminant of polynomial is found.
\end{abstract}

\maketitle

\section{Introduction}
After degree and number of variables, perhaps the most important invariant of a polynomial is the discriminant $\Delta(f)$ - a polynomial in the coefficients of $f$ that vanishes precisely when $f$ has a double root.  Much of the interesting behavior of $f$ is encoded in $\Delta(f)$.

Consider a homogeneous degree $d$ polynomial on $n$ variables $x_{i}$ 
\[
f = \sum_{ 1\leq i_{j}\leq n} a_{i_{1}, \ldots , i_{d}} 
\binom{d}{m_{1}, \ldots , m_{n}}  x_{i_{1}}\cdots x_{i_{d}}
,\]
where $a_{i_{1}, \dots , i_{d}}$ are constants, $m_{j}$ is the number of times that the index $j$ appears in the set $\{i_{1}, \dots ,  i_{d}\}$, and $\binom{d}{m_{1}, \ldots , m_{n}}$ is the multinomial coefficient.
In the case $d=2$, $f$ is equivalent to the matrix of data $A_{f}:=(a_{i,j})_{1\leq i,j\leq n}$, which is symmetric; $a_{j,i} = a_{i,j}$.
%
It is well known that when $d=2$, the discriminant $\Delta(f)$ is equal to the determinant $\det(A_{f})$.
In general $f$ is equivalent to the $d$-dimensional tensor of data $A_{f}:=(a_{i_{1}, \dots , i_{d}})_{1\leq i_{1}, \dots , i_{d}\leq n}$,  which is (fully) symmetric; $a_{i_{\sigma(1)}, \dots , i_{\sigma(d)}} =  a_{i_{1}, \dots , i_{d}}$ for all permutations $\sigma \in \mathfrak{S}_{d}$.  

A. Cayley \cite{Cayley} introduced the notion of the hyperdeterminant of a multidimensional matrix (tensor) analogous to the determinant of a square matrix.
The hyperdeterminant, whose definition we will recall below, may be thought of in analogy to the discriminant as a polynomial, which tells when a tensor is singular.

The hyperdeterminant went relatively unstudied for approximately 150 years until Gelfand, Kapranov and Zelevisnki brought the hyperdeterminant into a modern light in their groundbreaking work \cite{GKZ92, GKZ}. In particular they determined precisely when the hyperdeterminant is non-trivial and computed the degree.  Inspired by their work, we study the hyperdeterminant applied to a polynomial. We are naturally led to consider the $\mu$-discriminant, which is a partially symmetric analog and generalization of the hyperdeterminant also developed in \cite{GKZ}.
While the hyperdeterminant and $\mu$-discriminant are irreducible, this usually does not continue to hold after symmetrization. Our goal is to determine how the symmetrized hyperdeterminant factors, to determine the geometric meaning of each factor, and to determine the degrees and multiplicities of the factors.  In fact, we will answer these questions for the more general case of the $\mu$-discriminant and the result for the hyperdeterminant will follow as a special case.

The first example that is not a matrix is binary cubics.  The discriminant of a binary cubic has degree $4$.  The hyperdeterminant of a $2 \times 2 \times 2$ tensor also has degree $4$ and the formula is well known (see \cite[(1.5) p.448]{GKZ}). The symmetrization of this polynomial is the discriminant of a binary cubic.  This is the last case that has such simple behavior.

Our curiosity was peaked by the following example that was first pointed out to us by Giorgio Ottaviani.  For plane cubics, the discriminant has degree 12.  The hyperdeterminant of a $3 \times 3 \times 3$ matrix has degree 36. Using Macaulay2 \cite{M2} Ottaviani used Schl\"afli's method to compute the hyperdeterminant, applied this to a symmetric tensor, specialized to a random line and found that the symmetrization of the hyperdeterminant is a reducible polynomial which splits into a factor of degree 12 (the discriminant) and a factor of degree 4 with multiplicity $6$.  The degree $4$ factor turned out to be Aronhold's invariant for plane cubics and defines the variety of Fermat cubics. While Aronhold's invariant is classical, we refer the reader to \cite{Ottaviani09} where one finds a matrix construction which can be applied to construct Aronhold's invariant for degree $3$ symmetric forms on $3$ variables, Toeplitz's invariant \cite{Toeplitz} for triples of symmetric $3 \times 3$ matrices, and  Strassen's invariant \cite{Strassen83} for $3 \times 3 \times 3$ tensors.

After this example, Ottaviani posed the problem to understand and describe this phenomenon in general.
Indeed when $d$ or $n$ are larger than the preceding examples, the hyperdeterminant becomes quite complicated, with much beautiful structure, (see \cite{HSYY, CCDRS}). 
Our approach is to study these algebraic objects from a geometric point of view, thus avoiding some of the computational difficulties, such as those that arise in computing an expansion of the hyperdeterminant in terms of monomials, which would be very difficult in general. 

The outline of the article is the following.
In Section~\ref{sec:combinat} we recall terminology from combinatorics, namely the notion of one partition being refined by another and present a formula for the number of such refinements.
In Section~\ref{sec:multilinear} we review facts from multilinear algebra necessary for our calculations. 
In Section~\ref{sec:geometry} we recall the relevant geometric objects (including Segre-Veronese varieties, Chow varieties and projective duality).   
Finally in Section~\ref{sec:main result} we use geometric methods to prove our main results, which  are the following:

\begin{theorem}\label{thm:hyp}
The $n^{ \times d}$-hyperdeterminant of a symmetric tensor of degree $d\geq 2$ on $n\geq 2$ variables splits as the product\[
\prod_{\lambda} \Xi_{ \lambda, n}^{\M_{\lambda}}
,\]
where $\Xi_{ \lambda, n}$ is the equation of the dual variety of the Chow variety $\Chow_{\lambda}\PP ^{n-1}$ when it is a hypersurface in $\PP^{\binom{n-1+d}{d}-1}$, $\lambda = (\lambda_{1}, \dots , \lambda_{s}) $ is a partition of $d$, and the multiplicity $M_{\lambda} = \binom{d}{\lambda_{1}, \dots , \lambda_{s}}$ is the multinomial coefficient.
\end{theorem}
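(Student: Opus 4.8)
The plan is to realize the symmetrized hyperdeterminant as a linear section of a dual variety and to analyze that section component by component. By the theorem of Gelfand--Kapranov--Zelevinsky, the $n^{\times d}$-hyperdeterminant $\mathrm{Det}$ is, up to scale, the irreducible and reduced defining equation of the dual variety $\Seg^{\vee}$ of the Segre variety $\Seg=\Seg(\PP^{n-1}\times\cdots\times\PP^{n-1})\subset\PP((\CC^{n})^{\otimes d})$. Symmetrizing $\mathrm{Det}$ means restricting this equation to the symmetric subspace $\Sym^{d}\CC^{n}\subset(\CC^{n})^{\otimes d}$, so that the divisor of $\mathrm{Det}|_{\Sym^{d}\CC^{n}}$ is the intersection $\Seg^{\vee}\cap\PP(\Sym^{d}\CC^{n})$, taken with its natural scheme multiplicities. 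The whole problem is therefore to (i) identify the codimension-one components of this intersection and (ii) compute the order of vanishing of $\mathrm{Det}$ along each.

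For (i) I would compare tangent spaces. A symmetric $f$ lies on $\Seg^{\vee}$ exactly when the associated hyperplane is tangent to $\Seg$ at some point $p=[w_{1}\otimes\cdots\otimes w_{d}]$; because $f$ is symmetric the tangency conditions depend only on the multiset $\{w_{1},\dots,w_{d}\}$, whose multiplicity pattern is a partition $\lambda=(\lambda_{1},\dots,\lambda_{s})$ of $d$. Writing the distinct vectors among the $w_{j}$ as $v_{1},\dots,v_{s}$ with multiplicities $\lambda_{1},\dots,\lambda_{s}$, the point $p$ symmetrizes to $v_{1}^{\lambda_{1}}\cdots v_{s}^{\lambda_{s}}\in\Chow_{\lambda}\PP^{n-1}$. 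The image of $T_{p}\Seg$ under symmetrization is precisely the tangent space to $\Chow_{\lambda}\PP^{n-1}$ at this point, and since $f$ is symmetric it annihilates $T_{p}\Seg$ if and only if it annihilates this image; hence tangency to $\Seg$ at $p$ is equivalent to tangency to $\Chow_{\lambda}\PP^{n-1}$ at $v_{1}^{\lambda_{1}}\cdots v_{s}^{\lambda_{s}}$. This shows each $(\Chow_{\lambda}\PP^{n-1})^{\vee}$ sits inside the intersection, and conversely a general point of any component has a well-defined generic pattern $\lambda$, placing it in $(\Chow_{\lambda}\PP^{n-1})^{\vee}$. As $\Chow_{\lambda}\PP^{n-1}$ is irreducible its dual is irreducible, so the hypersurface components are exactly those $(\Chow_{\lambda}\PP^{n-1})^{\vee}$ that are hypersurfaces, whose equations are the $\Xi_{\lambda,n}$.

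The main obstacle is (ii), the multiplicities. The key point is that a general $f_{0}\in(\Chow_{\lambda}\PP^{n-1})^{\vee}$ is tangent to $\Seg$ not at one point but at all $\M_{\lambda}=\binom{d}{\lambda_{1},\dots,\lambda_{s}}$ orderings of the word in which each $v_{i}$ is repeated $\lambda_{i}$ times, and these are distinct reduced contact points of $\Seg$. Near $f_{0}$ the hypersurface $\Seg^{\vee}$ thus has $\M_{\lambda}$ smooth local branches, one per contact point. The symmetric group $\mathfrak{S}_{d}$ acts by permuting the tensor factors: it preserves $\Seg$ and $\Seg^{\vee}$, permutes these $\M_{\lambda}$ branches transitively within the orbit of $p$, and fixes $\PP(\Sym^{d}\CC^{n})$ pointwise. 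Consequently all $\M_{\lambda}$ branches restrict to $\PP(\Sym^{d}\CC^{n})$ along the same hypersurface $(\Chow_{\lambda}\PP^{n-1})^{\vee}$, and a transversality check shows each restriction is reduced there. Multiplying the $\M_{\lambda}$ local equations shows that $\mathrm{Det}|_{\Sym^{d}\CC^{n}}$ vanishes to order exactly $\M_{\lambda}$ along $(\Chow_{\lambda}\PP^{n-1})^{\vee}$, producing the multinomial multiplicity.

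I expect the delicate points to be verifying that the contact of a general $f_{0}$ is at $\M_{\lambda}$ distinct reduced points with ordinary tangency, so that the branches of $\Seg^{\vee}$ are smooth and meet the symmetric subspace transversally, and confirming that no positive-dimensional or higher-order contact loci contribute additional components. Finally, a global degree check, matching $\sum_{\lambda}\M_{\lambda}\deg\Xi_{\lambda,n}$ against the Gelfand--Kapranov--Zelevinsky degree of $\mathrm{Det}$, would confirm that the list of factors and their multiplicities is exhaustive.
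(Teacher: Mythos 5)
Your proposal is correct and follows essentially the same route as the paper: Proposition~\ref{prop:int} realizes $\V(Sym(HD_{d,n}))$ as $\Seg\left(\PP V^{\times d}\right)^{\vee}\cap\PP\left(S^{d}V^{*}\right)$, Lemmas~\ref{lem:part 1} and~\ref{lem:part 2} transfer tangency between the Segre and the Chow varieties using exactly your observation that a symmetric form takes equal values on all tensors with the same symmetrization, and the multiplicity is obtained, as in your branch count, as the number $M_{\lambda,(1^{d})}=\binom{d}{\lambda_{1},\dots,\lambda_{s}}$ of points of the Segre that symmetrize to a general point of $\Chow_{\lambda}(\PP V)$. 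The only differences are cosmetic: the paper proves the statement for all $\mu$-discriminants (Theorem~\ref{thm:main theorem}) and specializes to $\mu=(1^{d})$, and it phrases the multiplicity as a lower-bound/upper-bound count on contact points rather than via local branches and $\mathfrak{S}_{d}$-equivariance, with the delicate points you flag (reduced, zero-dimensional contact loci and transversality) treated at the same informal level there.
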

Geometrically, this theorem is essentially a statement about the symmetrization of the dual variety of the Segre variety. It says that the symmetrization of this dual variety becomes the union of several other varieties (with multiplicities).

In fact, Theorem~\ref{thm:hyp} is a special case of 
 the more general result for Segre-Veronese varieties (see Section~\ref{sec:geometry} for notation):
\begin{theorem}\label{thm:main theorem}
Let $\mu$ be a partition of $d\geq 2$, and $V$ be a complex vector space of dimension $n\geq 2$. Then\[
\Seg_{\mu}\left(\PP V^{ \times t}\right)^{\vee}\cap \PP \left(S^{d}V^{*}\right) = \bigcup_{\lambda \prec \mu} \Chow_{\lambda}\left(\PP V\right)^{\vee}
,\]
where $\lambda\prec \mu$ is the refinement partial order. 
In particular, 
\[\V(Sym(\Delta_{ \mu, n})) = \prod_{\lambda\prec \mu} \Xi_{ \lambda, n}^{M_{\lambda, \mu}}\]
where $\Delta_{ \mu, n}$ is the equation of the hypersurface $\Seg_{\mu}(\PP V^{\times t})^{\vee}$, $\Xi_{ \lambda, n}$ is the equation of $\Chow_{\lambda}\left(\PP V\right)^{\vee}$ when it is a hypersurface in $\PP(S^{d}V)$, and the multiplicity $M_{\lambda,\mu}$ is the number of partitions $\mu$ that refine $\lambda$.
\end{theorem}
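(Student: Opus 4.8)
The plan is to prove the geometric identity through the conormal (tangency) description of dual varieties, and then to read off the polynomial factorization and its exponents from a branch count on the conormal variety. First I would parametrize the smooth points of $\Seg_{\mu}(\PP V^{\times t})$ as $[v_1^{\mu_1}\otimes\cdots\otimes v_t^{\mu_t}]$ with each $v_i\neq 0$, and record the affine tangent space $\widehat{T}=\sum_i v_1^{\mu_1}\otimes\cdots\otimes(v_i^{\mu_i-1}V)\otimes\cdots\otimes v_t^{\mu_t}$. Under the inclusion $S^dV^{*}\hookrightarrow \bigotimes_i S^{\mu_i}V^{*}$, a form $F\in S^dV^{*}$ pairs with such a tensor through its full polarization $\widetilde F$: writing $\Phi(v_1,\dots,v_t)=\widetilde F(v_1^{\mu_1},\dots,v_t^{\mu_t})$, I would show, using Euler's relation together with the symmetry of $F$, that $F$ annihilates $\widehat{T}$ if and only if all partial gradients $\nabla_{v_i}\Phi$ vanish. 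Hence $F\in\Seg_{\mu}(\PP V^{\times t})^{\vee}\cap\PP(S^dV^{*})$ exactly when there is a nonzero configuration $(v_1,\dots,v_t)$ with $\nabla_{v_i}\Phi=0$ for every $i$.

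Because $F$ is symmetric, $\widetilde F$ depends only on the multiset of inserted vectors, so the next step is to group the $v_i$ by projective equality. Letting $u_1,\dots,u_s$ be the distinct values with index blocks $B_1,\dots,B_s$ and setting $\lambda_j=\sum_{i\in B_j}\mu_i$, the partition $\lambda=(\lambda_1,\dots,\lambda_s)$ is a coarsening of $\mu$, i.e. $\lambda\prec\mu$. The conditions $\nabla_{v_i}\Phi=0$ then coincide, up to the nonzero scalars $\mu_i$, with the linear forms $w\mapsto\widetilde F(u_j^{\lambda_j-1}w,\text{rest})$, which are precisely the conditions that $F$ annihilate the affine tangent space to $\Chow_{\lambda}(\PP V)$ at $u_1^{\lambda_1}\cdots u_s^{\lambda_s}$. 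This yields both inclusions: every Segre--Veronese contact produces a coarsening $\lambda\prec\mu$ with $F\in\Chow_{\lambda}(\PP V)^{\vee}$, and conversely, for any $\lambda\prec\mu$ a choice of compatible grouping realizes a $\Chow_{\lambda}$-tangency as a Segre--Veronese tangency. Taking closures gives the set-theoretic equality $\Seg_{\mu}(\PP V^{\times t})^{\vee}\cap\PP(S^dV^{*})=\bigcup_{\lambda\prec\mu}\Chow_{\lambda}(\PP V)^{\vee}$.

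For the factorization I would restrict the reduced irreducible equation $\Delta_{\mu,n}$ of the hypersurface $\Seg_{\mu}(\PP V^{\times t})^{\vee}$ to the linear subspace $\PP(S^dV^{*})$; by the previous step its zero locus is $\bigcup_{\lambda}\Chow_{\lambda}(\PP V)^{\vee}$, so $\mathrm{Sym}(\Delta_{\mu,n})=\prod_{\lambda}\Xi_{\lambda,n}^{M_{\lambda,\mu}}$, the product taken over those $\lambda\prec\mu$ for which $\Chow_{\lambda}(\PP V)^{\vee}$ is a hypersurface. To pin down each exponent I would count branches: over a generic $F_0\in\Chow_{\lambda}(\PP V)^{\vee}$, the contact points $z\in\Seg_{\mu}(\PP V^{\times t})$ at which $F_0$ is tangent correspond bijectively to the groupings realizing $\lambda$ as a coarsening of $\mu$, that is, to the refinements of $\lambda$ by $\mu$, of which there are exactly $M_{\lambda,\mu}$ (the count from Section~\ref{sec:combinat}). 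Since the conormal variety $\mathrm{Con}(\Seg_{\mu}(\PP V^{\times t}))$ is smooth and, by reflexivity, maps birationally onto the dual, each such contact point gives a smooth local branch of $\Seg_{\mu}(\PP V^{\times t})^{\vee}$ through $F_0$, and provided $\PP(S^dV^{*})$ meets each branch transversally each contributes vanishing order one, summing to $M_{\lambda,\mu}$.

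The main obstacle is precisely this multiplicity computation: the set-theoretic equality is careful tangency bookkeeping, but to get the exponents I must show that the $M_{\lambda,\mu}$ contact points over a generic $F_0\in\Chow_{\lambda}(\PP V)^{\vee}$ produce exactly $M_{\lambda,\mu}$ reduced, smooth, transversally met branches of $\Seg_{\mu}(\PP V^{\times t})^{\vee}$ --- i.e. that no contact is ramified or otherwise degenerate, and that the very special subspace $\PP(S^dV^{*})$ is not tangent to any branch. This demands a local analysis of the conormal/Gauss map near each contact point (generic smoothness of the branches through the generic point of $\Chow_{\lambda}(\PP V)^{\vee}$, together with a transversality check), and it is here that the genericity of $F_0$ and the reflexivity theorem must be combined with care. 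The specialization to Theorem~\ref{thm:hyp} is then immediate by taking $\mu=(1^d)$, for which every partition $\lambda$ of $d$ is a coarsening and $M_{\lambda,(1^d)}=\binom{d}{\lambda_1,\dots,\lambda_s}$.
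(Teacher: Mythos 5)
Your proposal is correct and is essentially the paper's own argument: the set-theoretic equality rests on the same tangency-transfer computation (a symmetric form takes equal values on all tensors that symmetrize to a given polynomial, so contact with $\Seg_{\mu}\left(\PP V^{\times t}\right)$ at $[u_{1}^{\mu_{1}}\otimes\cdots\otimes u_{t}^{\mu_{t}}]$ is equivalent to contact with $\Chow_{\lambda}(\PP V)$ at the symmetrized point, for the coarsening $\lambda$ induced by grouping proportional factors), and the exponent $M_{\lambda,\mu}$ arises, exactly as in the paper's second proof of Lemma~\ref{lem:part 1}, as the number of contact points on $\Seg_{\mu}\left(\PP V^{\times t}\right)$ lying over a generic point of $\Chow_{\lambda}(\PP V)^{\vee}$. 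The transversality/order-one issue you flag as the main obstacle is handled at the same level of rigor in the paper itself, which likewise equates the multiplicity with the contact-point count (adding only the explicit check that a generic such form admits no further contact points), so this does not place your argument on a different route from the paper's.
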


We consider only the case where the vector spaces in a tensor product have the same dimension, so \cite[Corollary 3.4]{WeymanZelevinsky_mult} implies that the duals to all Segre-Veronese varieties we will study herein (where the individual factors all have the same dimension) are hypersurfaces.
So we need to know which dual varieties of Chow varieties are hypersurfaces.
\begin{theorem}\label{thm:hypersurface}
Suppose $d\geq 2$, $\dim V =n \geq 2$ and $\lambda =(\lambda_{1}, \dots , \lambda_{s}) =  (1^{m_{1}}, \dots , p^{m_{p}})$ is a partition of $d$. Then $\Chow_{\lambda}\left(\PP V\right)^{\vee}$  a hypersurface with the only exceptions
\begin{itemize}
\item $n = 2$ and $m_{1}\neq 0$ 
\item $n > 2$, $s=2$ and $m_{1}=1$ (so $\lambda =(d-1,1)$).
\end{itemize}
\end{theorem}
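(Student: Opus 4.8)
The plan is to compute the dual defect $\delta := \binom{n-1+d}{d}-2-\dim\Chow_\lambda(\PP V)^\vee$ at a general point and invoke the classical fact (over $\CC$) that $\delta$ equals the corank of the projective second fundamental form $\mathrm{II}_{F,\Phi}$ taken at a general point $F\in\Chow_\lambda(\PP V)$ in a general conormal direction $\Phi\in(\hat T_F)^\perp$. Thus $\Chow_\lambda(\PP V)^\vee$ is a hypersurface exactly when $\mathrm{II}_{F,\Phi}$ is nondegenerate for general $F$ and general $\Phi$. First I would fix $F=\ell_1^{\lambda_1}\cdots\ell_s^{\lambda_s}$ with the $\ell_i\in V$ in general position and record the affine tangent space $\hat T_F=\sum_{i=1}^s\tfrac{F}{\ell_i}V$, so that $\dim\Chow_\lambda(\PP V)=s(n-1)$ and tangent directions are parametrized by tuples $(w_1,\dots,w_s)$ with $w_i\in V/\langle\ell_i\rangle$.

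Differentiating $(\ell_i)\mapsto\prod_i\ell_i^{\lambda_i}$ twice and pairing with $\Phi$ gives
\[
\mathrm{II}_{F,\Phi}\bigl((w_i)\bigr)=\sum_i\lambda_i(\lambda_i-1)\,\Phi\!\left(\tfrac{F}{\ell_i^2}w_i^2\right)+2\sum_{i<k}\lambda_i\lambda_k\,\Phi\!\left(\tfrac{F}{\ell_i\ell_k}w_iw_k\right),
\]
an $s(n-1)\times s(n-1)$ symmetric form whose $(i,i)$ block carries the factor $\lambda_i(\lambda_i-1)$ and whose $(j,k)$ block is $\lambda_j\lambda_k B_{jk}$ with $B_{jk}(w_j,w_k)=\Phi(\tfrac{F}{\ell_j\ell_k}w_jw_k)$. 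The crucial structural point is that a part $\lambda_k=1$ annihilates the $k$-th diagonal block, while for each $j\ne k$ the conormal condition $\Phi(\tfrac{F}{\ell_k}V)=0$ forces $B_{jk}(\,\cdot\,,[\ell_j])=0$. Consequently, when $\lambda_k=1$ the tangent vector supported in the $k$-th slot by $[\ell_j]\in V/\langle\ell_k\rangle$ is annihilated by its diagonal block and by $B_{jk}$, and it lies in the kernel of the whole form unless some further block $B_{ik}$ with $i\ne j,k$ rescues it.

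This immediately isolates the exceptions. When $n>2$, $s=2$ and $\lambda=(d-1,1)$ the second diagonal block vanishes ($\lambda_2=1$) and $B_{12}$ has $[\ell_1]$ in its kernel, so the block form $\bigl(\begin{smallmatrix}A&B_{12}\\ B_{12}^{\mathsf T}&0\end{smallmatrix}\bigr)$ is degenerate and $\delta\ge1$, whence the dual is not a hypersurface. When $n=2$ each quotient $V/\langle\ell_k\rangle$ is a line, so every transverse direction is proportional to some $\ell_j$ and the products $\tfrac{F}{\ell_j\ell_k}w_jw_k$ collapse into $\hat T_F$; here I expect to show that each simple root contributes exactly one kernel direction, giving $\delta=m_1$ and recovering the exception $m_1\ne0$. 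In the remaining cases a slot with $\lambda_k=1$ is rescued (a third factor exists once $s\ge3$), or every diagonal block is nonzero (when $m_1=0$), and for $s=1$ one recovers the Veronese whose dual is the classical discriminant hypersurface.

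The main obstacle is the positive direction: proving that $\mathrm{II}_{F,\Phi}$ is genuinely nondegenerate for general $\Phi$ in every non-exceptional case, not merely that no obvious kernel vector survives. I would handle this by describing the conormal space through apolarity — $\Phi$ ranges over the degree-$d$ functionals killed by the linear conditions $\Phi(\tfrac{F}{\ell_i}V)=0$ — and then showing $\det\mathrm{II}_{F,\Phi}$ is not identically zero there, e.g. by specializing $\Phi$ to a functional supported on a monomial involving all the $\ell_i$ for which the block matrix becomes visibly invertible. Equivalently, one may realize $\Chow_\lambda(\PP V)$ as the image of the Segre--Veronese variety $\Seg\bigl(v_{\lambda_1}(\PP V)\times\cdots\times v_{\lambda_s}(\PP V)\bigr)$ under multiplication and use $\Chow_\lambda(\PP V)^\vee=\Seg(\cdots)^\vee\cap\PP(S^dV^*)$ together with the Weyman--Zelevinsky dual-defect criterion, the remaining work being precisely to verify that the symmetric subspace $\PP(S^dV^*)$ meets the Segre--Veronese discriminant transversally outside the two exceptional families — which is once more the nondegeneracy of $\mathrm{II}_{F,\Phi}$.
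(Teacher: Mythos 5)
Your framework---Katz's formula identifying the dual defect with the corank of the second fundamental form $\mathrm{II}_{F,\Phi}$ at a general point $F$ in a general conormal direction $\Phi$---is legitimate, and it is genuinely different from the paper's route: the paper explicitly declines the Hessian computation (``we prefer to work geometrically'') and argues instead that a defective dual forces a general tangent hyperplane to be tangent along a line, that all lines in $\Chow_{\lambda}(\PP V)$ lie in some $\PP(V\cdot f)$ (so $m_{1}=0$ is immediate), and that for $m_{1}\geq 1$ a count of linear conditions on $V\cdot f$ separates $s=2$ from $s\geq 3$. Your Hessian formula is correct, and your treatment of the defective cases works: for $\lambda=(d-1,1)$ the vector $[\ell_{1}]$ in the second slot is killed by the vanishing diagonal block and by conormality $\Phi\bigl(\tfrac{F}{\ell_{2}}V\bigr)=0$; and for $n=2$, writing $w_{i}w_{k}$ in the basis $\{\ell_{i},\ell_{k}\}$ of $V$ shows every product $\tfrac{F}{\ell_{i}\ell_{k}}w_{i}w_{k}$ lands in $\hat T_{F}$, so $\mathrm{II}$ is block diagonal and its kernel is exactly the sum of the slots with $\lambda_{k}=1$.

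The genuine gap is the positive direction, which you flag yourself: nondegeneracy of $\mathrm{II}_{F,\Phi}$ for general $\Phi$ in every non-exceptional case with $n\geq 3$ is the bulk of the theorem, and the device you offer does not deliver it. A functional ``supported on a monomial involving all the $\ell_{i}$'' typically fails: for $\lambda=(2,2)$, $n=3$, $F=x_{1}^{2}x_{2}^{2}$, the only conormal monomial functional involving both $x_{1}$ and $x_{2}$ is $(x_{1}x_{2}x_{3}^{2})^{*}$, and for it both diagonal blocks vanish while $B_{12}$ has rank one, so $\mathrm{II}$ is degenerate; a working certificate needs a combination such as $(x_{1}^{4})^{*}+(x_{2}^{4})^{*}+(x_{1}x_{2}x_{3}^{2})^{*}$, and you give no recipe valid for all non-exceptional $(\lambda,n)$, nor any argument for $s>n$ (e.g.\ $\lambda=(1^{d})$, $d>n$), where the $\ell_{i}$ cannot be taken linearly independent and the block structure degenerates. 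Your ``equivalent'' fallback rests on a false identity: $\Chow_{\lambda}(\PP V)^{\vee}$ is a \emph{proper} subvariety of $\Seg_{\lambda}(\cdots)^{\vee}\cap\PP(S^{d}V^{*})$ for every $\lambda\neq(d)$, since by Theorem~\ref{thm:main theorem} that intersection is the union $\bigcup_{\kappa\prec\lambda}\Chow_{\kappa}(\PP V)^{\vee}$ (it always contains the discriminant $\Chow_{(d)}(\PP V)^{\vee}$), and equality in Proposition~\ref{prop:project} requires the projection $\Seg_{\lambda}\to\Chow_{\lambda}$ to be an isomorphism, which fails whenever $\lambda$ has a repeated part. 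This missing nondegeneracy argument is precisely the difficulty the paper's contact-locus approach is built to avoid: it only needs to exclude a line in the contact locus, never to certify that a full Hessian is invertible.
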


In the binary case we have the following closed formula.
\begin{theorem}
The degree of $\Chow_{\lambda}(\PP^{1})^{\vee}$ with $\lambda = (1^{m_{1}},2^{m_{2}}, \ldots , p^{m_{p}})$, $m_{1}=0$ and $m= \sum_{i} m_{i}$ is
\[(m+1)
\binom{m}{m_{2}, \ldots , m_{p}}1^{m_{2}}2^{m_{3}}\cdots (p-1)^{m_{p}} 
\]
\end{theorem}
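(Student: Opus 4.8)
The plan is to describe $X:=\Chow_\lambda(\PP^1)^\vee$ explicitly via apolarity, recognize it as the image of a projective bundle over an auxiliary Chow variety, and compute its degree by pushing forward. The technical heart is a tangent space computation. Write a general point of $\Chow_\lambda(\PP^1)$ as $f_0=\prod_{s=1}^m \ell_s^{a_s}$ with distinct linear forms $\ell_s$ and multiplicities $(a_s)=\lambda$. Moving each $\ell_s$ gives the affine tangent space $\hat T_{f_0}\Chow_\lambda=\sum_s (f_0/\ell_s)\cdot V$. Factoring out $G:=\prod_s \ell_s^{a_s-1}$ (of degree $d-m$) and $P:=\prod_s \ell_s$, one has $f_0/\ell_s=G\cdot(P/\ell_s)$, so $\hat T_{f_0}\Chow_\lambda=G\cdot\bigl(\sum_s (P/\ell_s)V\bigr)=G\cdot S^m V$, because $\sum_s (P/\ell_s)V$ is the tangent space to $\Chow_{(1^m)}(\PP^1)=\PP(S^m V)$, i.e.\ all of $S^m V$. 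The crucial feature is that $G$ has factorization type $\lambda'=((i-1)^{m_i})_i$, a partition of $d-m$, and $f_0\mapsto G$ identifies general points with general points of $\Chow_{\lambda'}(\PP^1)\subset\PP(S^{d-m}V)$.

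Next, using the apolar pairing $S^d V^*\times S^d V\to\CC$, a class $[g]\in\PP(S^d V^*)$ lies on $X$ iff $g$ annihilates $\hat T_{f_0}\Chow_\lambda=G\cdot S^m V$ for some such $f_0$, i.e.\ iff the contraction $g\lrcorner G\in S^m V^*$ vanishes for some $G\in\Chow_{\lambda'}(\PP^1)$. Hence $X=\overline{\bigcup_{G}\PP(L_G)}$ with $L_G:=(G\cdot S^m V)^\perp\subset S^d V^*$ of dimension $d-m$. I would package this as $\mathcal P:=\PP(\mathcal S)$, the projectivization of the rank-$(d-m)$ bundle $\mathcal S$ over $B:=\Chow_{\lambda'}(\PP^1)$ with fibre $L_G$; then $\dim\mathcal P=m+(d-m-1)=d-1$, and the first projection $p:\mathcal P\to\PP(S^d V^*)$ has image $X$. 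The step needing the most care is checking that $p$ is birational onto $X$: this holds because a general $[g]\in X$ is tangent to $\Chow_\lambda(\PP^1)$ at a unique point $f_0$ (birationality of the conormal map), and $f_0$ determines $G$ uniquely since every part of $\lambda$ exceeds $1$.

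Finally I would compute $\deg X=\int_{\mathcal P} p^*h^{\,d-1}$ with $h$ the hyperplane class. Since $p^*h$ restricts to the relative $\mathcal O_{\mathcal P}(1)$, pushing forward to $B$ gives $\int_B s_m(\mathcal S)=\int_B c_m(\mathcal Q)$, where $\mathcal Q=(S^d V^*\otimes\mathcal O_B)/\mathcal S$ is the universal quotient (using $c(\mathcal S)c(\mathcal Q)=1$). Dualizing the multiplication sequence identifies $\mathcal Q$ with $\mathcal M^*$, where $\mathcal M\subset S^d V\otimes\mathcal O_B$ has fibre $G\cdot S^m V$; but multiplication $\mathcal O_B(-1)\otimes S^m V\to\mathcal M$ is a fibrewise isomorphism, so $\mathcal M\cong\mathcal O_B(-1)\otimes S^m V$ and $c(\mathcal Q)=c(\mathcal M^*)=(1+\eta)^{m+1}$ with $\eta=c_1(\mathcal O_B(1))$. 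Thus $c_m(\mathcal Q)=(m+1)\eta^m$, giving
\[\deg X=(m+1)\int_B\eta^m=(m+1)\deg\Chow_{\lambda'}(\PP^1).\]

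It remains to evaluate $\deg\Chow_{\lambda'}(\PP^1)$. Writing it as the image of $\prod_i\PP(S^{m_i}V)$ under the multiplication of multidegree $(i-1)_i$, we have $\deg\Chow_{\lambda'}(\PP^1)=\int_{\prod_i\PP^{m_i}}\bigl(\sum_i (i-1)H_i\bigr)^m$; only the balanced monomial $\prod_i H_i^{m_i}$ survives, yielding $\binom{m}{m_2,\ldots,m_p}\prod_i (i-1)^{m_i}=\binom{m}{m_2,\ldots,m_p}1^{m_2}2^{m_3}\cdots(p-1)^{m_p}$. Multiplying by $(m+1)$ gives the stated formula (one checks $\lambda=(3,2)$ gives $3\cdot 4=12$, matching a direct catalecticant computation). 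I expect the birationality of $p$, together with the clean identification of $X$ with the bundle $\mathcal P$, to be the only genuinely delicate points; the Chern class bookkeeping and the multinomial extraction are routine once the tangent space is identified as $G\cdot S^m V$.
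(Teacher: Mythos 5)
Your proof is correct, and it takes a genuinely different route from the paper's. The paper deduces the formula purely combinatorially: it quotes the degree formula of \cite[Theorem~XIV.2.5]{GKZ} for the hyperdeterminant of format $\kappa$, written as a sum over partitions $\lambda$ with $m_{1}=0$ involving the Gale--Ryser numbers $d_{\kappa,\lambda}$, identifies that degree with $\deg(\Delta_{\kappa,1})$ via the Fundamental Theorem of Algebra, and then compares against the triangular system $\deg(\Delta_{\kappa,1})=\sum_{\lambda\prec\kappa}d_{\lambda}M_{\lambda,\kappa}$ of Corollary~\ref{cor:degree}, so the claimed values are forced by uniqueness of the solution (using $d_{(1^{r}),\lambda}=M_{\lambda,(1^{r})}$); in particular the paper's argument presupposes Theorem~\ref{thm:main theorem} and GKZ's combinatorics. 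You instead compute the degree directly by intersection theory on what is essentially the conormal variety: the identity $\hat T_{f_{0}}\Chow_{\lambda}(\PP^{1})=G\cdot S^{m}V$, with $G=\prod_{s}\ell_{s}^{a_{s}-1}$ of type $\lambda'=((i-1)^{m_{i}})_{i}$, exhibits the dual as the birational image of the $\PP^{d-m-1}$-bundle $\PP(\mathcal{S})$ over $\Chow_{\lambda'}(\PP^{1})$, and the Segre/Chern class bookkeeping yields the clean intermediate identity $\deg\Chow_{\lambda}(\PP^{1})^{\vee}=(m+1)\deg\Chow_{\lambda'}(\PP^{1})$, which does not appear in the paper. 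Your approach is self-contained (independent of Theorem~\ref{thm:main theorem} and of the GKZ formula), it explains each factor geometrically (the $m+1$ is $c_{m}$ of $\mathcal{O}_{B}(1)\otimes S^{m}V^{*}$, and the multinomial times $\prod_{i}(i-1)^{m_{i}}$ is the classical degree of the coincident root locus $\Chow_{\lambda'}(\PP^{1})$), and the positivity of the resulting Chern number even re-proves the $n=2$, $m_{1}=0$ case of Theorem~\ref{thm:hypersurface}; what the paper's route buys is brevity, given that its structural results are already in place. Two steps in your argument deserve to be made explicit: first, birationality of $p$ requires not only the uniqueness of the contact point (characteristic-zero reflexivity, valid once one knows the dual is a hypersurface --- cite Theorem~\ref{thm:hypersurface}, or extract it from the nonvanishing of your pushforward) and the recovery of $f_{0}$ from $G$ (where $m_{1}=0$ is essential), but also a dimension count showing that the fibre of $p$ over a general point of $X$ avoids the part of $\PP(\mathcal{S})$ lying over forms in $\Chow_{\lambda'}(\PP^{1})$ with extra root coincidences; second, your final evaluation of $\deg\Chow_{\lambda'}(\PP^{1})$ implicitly uses that $(h_{i})_{i}\mapsto\prod_{i}h_{i}^{i-1}$ is birational onto $\Chow_{\lambda'}(\PP^{1})$, which holds because the exponents $i-1$ are pairwise distinct --- the same multiplicity-recovery argument you already used for $p$.
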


In more than $2$ variables we have a recursive procedure for computing the degree which is a consequence of Theorem~\ref{thm:main theorem}.
\begin{cor}\label{cor:degree}
Suppose $\dim V \geq 2$.  Let $d_{\lambda}$ denote $ \deg(\Chow_{\lambda}\left(\PP V\right)^{\vee}) $ when it is a hypersurface and $0$ otherwise.
  Then the vector $(d_{\lambda})_{\lambda}$ is the unique solution to the (triangular) system of equations\[
\deg(\Delta_{ \mu, n}) = \sum_{\lambda \prec \mu} d_{\lambda}M_{\lambda,\mu}
.\]
\end{cor}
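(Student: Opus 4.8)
The plan is to pass from the polynomial identity of Theorem~\ref{thm:main theorem} to an identity of degrees, and then to invert the resulting linear system using the combinatorics of the refinement order.

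First I would take the factorization
\[
\V(\Sym(\Delta_{\mu,n})) = \prod_{\lambda\prec\mu}\Xi_{\lambda,n}^{M_{\lambda,\mu}}
\]
supplied by Theorem~\ref{thm:main theorem} and compare the degrees of the two sides. The symmetrization $\Sym(\Delta_{\mu,n})$ is obtained from $\Delta_{\mu,n}$ by a linear substitution in the tensor entries (restriction to the symmetric coordinates), so it preserves degree on any polynomial it does not annihilate; and Theorem~\ref{thm:main theorem} certifies that $\Sym(\Delta_{\mu,n})$ is not identically zero. Hence $\deg(\Sym(\Delta_{\mu,n}))=\deg(\Delta_{\mu,n})$. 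On the right-hand side degree is additive over products, and the degree of the irreducible hypersurface $\Chow_\lambda(\PP V)^\vee$ equals the degree of its defining equation $\Xi_{\lambda,n}$, i.e.\ $d_\lambda$; for the partitions $\lambda$ for which $\Chow_\lambda(\PP V)^\vee$ fails to be a hypersurface (the cases classified in Theorem~\ref{thm:hypersurface}) no factor $\Xi_{\lambda,n}$ occurs and we have set $d_\lambda=0$, so such $\lambda$ contribute $d_\lambda M_{\lambda,\mu}=0$ either way. Collecting terms gives
\[
\deg(\Delta_{\mu,n})=\sum_{\lambda\prec\mu}d_\lambda M_{\lambda,\mu}
\]
for each partition $\mu$ of $d$, which is precisely the assertion that the true degree vector $(d_\lambda)_\lambda$ solves the system.

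For uniqueness I would show the coefficient matrix $(M_{\lambda,\mu})$ is invertible. Using the refinement-counting results of Section~\ref{sec:combinat}, the entry $M_{\lambda,\mu}$ vanishes unless $\lambda\prec\mu$, and the diagonal entry $M_{\mu,\mu}$ equals $1$ because $\mu$ refines itself in exactly one (trivial) way. Ordering the partitions of $d$ by any linear extension of the refinement order therefore makes $(M_{\lambda,\mu})$ triangular with all diagonal entries equal to $1$; its determinant is $1$, so it is invertible over $\ZZ$ and the system has a unique solution. Solving upward from the bottom of the order then yields the advertised recursion: $d_\mu$ is determined by $\deg(\Delta_{\mu,n})$ together with the previously computed $d_\lambda$ for $\lambda\prec\mu$ with $\lambda\neq\mu$, while the values $\deg(\Delta_{\mu,n})$ are the known degrees of the $\mu$-discriminants.

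The one place demanding care is the pair of combinatorial claims about $(M_{\lambda,\mu})$ — that it is supported on $\{\lambda\prec\mu\}$ and that its diagonal is unital — since these are exactly what make the system triangular and invertible. Both should be read off from the description of $M_{\lambda,\mu}$ as the number of ways $\mu$ refines $\lambda$, but fixing $M_{\mu,\mu}=1$ relies on the convention that the trivial refinement is the only self-refinement, which I would confirm against the explicit refinement formula of Section~\ref{sec:combinat}. The degree bookkeeping itself — additivity over the product and invariance under the linear symmetrization map — is routine once the non-vanishing is quoted from Theorem~\ref{thm:main theorem}.
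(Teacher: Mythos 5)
Your overall strategy is the same as the paper's: read the degree identity off the factorization in Theorem~\ref{thm:main theorem} (symmetrization is restriction of a homogeneous polynomial to a linear subspace, hence degree-preserving when nonzero; degree is additive over the product; partitions for which $\Chow_{\lambda}(\PP V)^{\vee}$ is not a hypersurface contribute $0$), then invert the triangular system. But your key combinatorial claim is false: $M_{\mu,\mu}\neq 1$ in general. Under the paper's definition, two refinement expressions are counted as distinct whenever different \emph{indices} of $\mu$ are used, even if the corresponding parts of $\mu$ have equal values; consequently, by Proposition~\ref{prop:M properties}, for $\mu=(1^{m_{1}},2^{m_{2}},\dots,p^{m_{p}})$ the diagonal entry is $M_{\mu,\mu}=m_{1}!\cdots m_{p}!$, not $1$. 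This is not a harmless convention that you can normalize away; it is exactly what produces the observed multiplicities. In the plane cubic example ($d=3$, $n=3$, $\mu=(1,1,1)$) the degree identity reads $36 = 12\cdot M_{(3),(1,1,1)} + 4\cdot M_{(1,1,1),(1,1,1)} + 0\cdot M_{(2,1),(1,1,1)} = 12\cdot 1 + 4\cdot 6$, so the Aronhold factor carries multiplicity $M_{(1,1,1),(1,1,1)}=3!=6$; with your diagonal value the identity would assert $36=12+4$, which is false. Your own verification plan (``confirm against the explicit refinement formula of Section~\ref{sec:combinat}'') would have exposed this, since that formula distinguishes index choices.

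The damage is localized and repairable. For uniqueness you only need $(M_{\lambda,\mu})$ to be triangular with \emph{nonzero} diagonal: triangularity follows, as you say, from $M_{\lambda,\mu}=0$ unless $\lambda\prec\mu$ (equivalently, from the second item of Proposition~\ref{prop:M properties}), and the diagonal entries $m_{1}!\cdots m_{p}!$ are nonzero, so the system has a unique solution over $\mathbb{Q}$. What must be dropped is your claim that the determinant equals $1$ and the attendant invertibility over $\ZZ$: the determinant is $\prod_{\mu\vdash d}M_{\mu,\mu}$, a positive integer typically larger than $1$. With that correction, your argument coincides with the paper's proof, which likewise combines the factorization of Theorem~\ref{thm:main theorem} with the triangularity statement of Proposition~\ref{prop:M properties}, and your explicit treatment of why symmetrization preserves degree is a point the paper leaves implicit.
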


The multiplicities $M_{\lambda,\mu}$ have a nice generating function.
\begin{prop}\label{mark}
Suppose $\lambda$, $\mu$, are partitions of $d$, $p_{\lambda}$ and $m_{\mu}$ are respectively the power-sum and monomial symmetric functions. Then the matrix $(M_{\lambda,\mu})$ is the change of basis matrix
\begin{equation}
p_{\lambda}(x) = \sum_{\mu\vdash d}M_{\lambda,\mu}m_{\mu}(x)
.\end{equation}
\end{prop}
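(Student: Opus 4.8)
The plan is to expand the power sum $p_\lambda$ directly in the monomial basis and read off the coefficients combinatorially. Writing $\lambda=(\lambda_1,\dots,\lambda_s)$, I would begin from the factorization $p_\lambda=\prod_{j=1}^{s}p_{\lambda_j}=\prod_{j=1}^{s}\bigl(\sum_i x_i^{\lambda_j}\bigr)$ and distribute the product. A single term of the expansion is specified by choosing, for each part $\lambda_j$, a variable $x_{f(j)}$ to which that part is assigned; the associated monomial is
\[
\prod_{j=1}^{s}x_{f(j)}^{\lambda_j}=\prod_{i}x_i^{\,\alpha_i},\qquad \alpha_i=\sum_{j:\,f(j)=i}\lambda_j .
\]
Thus every monomial of $p_\lambda$ is recorded by an assignment $f$ of the parts of $\lambda$ to variables, and its exponent vector $\alpha=(\alpha_i)$ has entries equal to the block sums of the induced grouping of the parts of $\lambda$.

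Next I would collect these monomials according to the partition $\mu$ obtained by sorting $\alpha$ into weakly decreasing order. Since $p_\lambda$ is a symmetric function, every monomial whose exponent vector sorts to a fixed $\mu$ occurs with the same coefficient, so the coefficient of $m_\mu$ in $p_\lambda$ equals the coefficient of the single representative monomial $x_1^{\mu_1}x_2^{\mu_2}\cdots$. By the previous paragraph this coefficient is exactly the number of assignments $f$ that realize that exponent vector, i.e.\ the number of ways to distribute the parts of $\lambda$ among the parts of $\mu$ so that the block assigned to $\mu_k$ sums to $\mu_k$. This count is precisely the refinement multiplicity introduced in Section~\ref{sec:combinat}, and hence $p_\lambda=\sum_{\mu\vdash d}M_{\lambda,\mu}\,m_\mu$, which exhibits $(M_{\lambda,\mu})$ as the change-of-basis matrix between the power-sum and monomial bases.

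The step I expect to require the most care is the bookkeeping when $\lambda$ or $\mu$ has repeated parts: one must count assignments into labelled blocks (the functions $f$), not unordered set partitions of the parts, since transposing two equal parts of $\mu$ produces genuinely distinct monomials that are nonetheless gathered into the same $m_\mu$. It is this labelled count, rather than the unordered one, that produces the expected normalizations; for instance the row $\lambda=(1^d)$ gives $p_{(1^d)}=\sum_{\mu}\binom{d}{\mu_1,\mu_2,\dots}\,m_\mu$, reproducing the multiplicities of Theorem~\ref{thm:hyp}. I would also check that the two subscripts of the combinatorial multiplicity are oriented to agree with the refinement order used in Theorem~\ref{thm:main theorem}, so that the refinement count and the transition coefficient carry the same pair of indices. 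Once these points are settled the identity is immediate, and because $\{p_\lambda\}$ and $\{m_\mu\}$ are both bases of the space of degree-$d$ symmetric functions the array $(M_{\lambda,\mu})$ is automatically invertible and triangular for the refinement order — exactly the structure that Corollary~\ref{cor:degree} exploits to solve for the degrees $d_\lambda$.
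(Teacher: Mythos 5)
The paper records Proposition~\ref{mark} as an ``apparently well-known'' fact and gives no proof of it, so there is no argument in the text to compare against; your strategy --- expand $p_\lambda=\prod_j\bigl(\sum_i x_i^{\lambda_j}\bigr)$, index the terms by assignments $f$ of the parts of $\lambda$ to variables, and read off the coefficient of the representative monomial $x_1^{\mu_1}x_2^{\mu_2}\cdots$ --- is the standard proof of this identity, and everything up through your computation of that coefficient is correct.

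The gap is exactly the subscript-orientation check that you deferred as routine bookkeeping: it is not routine, and it fails. The count you obtain --- the number of ways to distribute the parts of $\lambda$ into blocks so that the block assigned to $\mu_k$ sums to $\mu_k$ --- writes each part of $\mu$ as a sum of (indexed) parts of $\lambda$, which under the paper's definition \eqref{eq:refine} is the number of refinements from $\lambda$ to $\mu$, i.e.\ $M_{\mu,\lambda}$, \emph{not} $M_{\lambda,\mu}$ (in $M_{\lambda,\mu}$ it is the parts of $\mu$ that are grouped into blocks summing to the parts of $\lambda$). Your argument therefore establishes $p_\lambda=\sum_{\mu\vdash d}M_{\mu,\lambda}\,m_\mu$, equivalently $p_\mu=\sum_{\lambda\vdash d}M_{\lambda,\mu}\,m_\lambda$, which is the transpose of the displayed equation; since $(M_{\lambda,\mu})$ is not symmetric, the two statements genuinely differ. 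Concretely, for $d=2$ the displayed equation would give $p_{(2)}=M_{(2),(2)}m_{(2)}+M_{(2),(1,1)}m_{(1,1)}=m_{(2)}+m_{(1,1)}$, whereas $p_{(2)}=m_{(2)}$. Your own sanity check exhibits the same problem: $p_{(1^d)}=\sum_\mu\binom{d}{\mu}m_\mu$ is true, but by Proposition~\ref{prop:M properties} the multinomial $\binom{d}{\mu}$ equals $M_{\mu,(1^d)}$, while $M_{(1^d),\mu}=0$ for all $\mu\neq(1^d)$, so the formula as displayed would instead give $p_{(1^d)}=d!\,m_{(1^d)}$, which is false. So the combinatorial core of your proof is sound, but it proves the transposed identity; the proposition must be read (and, downstream, applied in Corollary~\ref{cor:degree} and in the proof of Theorem~\ref{thm:degs}, where $\mu$ is the finer partition) in the form $p_\mu=\sum_{\lambda}M_{\lambda,\mu}m_\lambda$, and a complete proof has to carry out, rather than defer, precisely this identification.
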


The degree of $\Delta_{ \mu, n}$ is given by a generating function  (see \cite[Theorem~3.1, Proposition 3.2]{GKZ92} or \cite[page 454]{GKZ}).
So Corollary~\ref{cor:degree} gives a recursive way to compute all of the degrees of the duals of the Chow varieties, and moreover we can package this with Proposition~\ref{mark} into a generating function.

\begin{theorem}\label{thm:degs}
Suppose $\dim V \geq 2$.  Let $d_{\lambda}$ denote $ \deg(\Chow_{\lambda}\left(\PP V\right)^{\vee}) $ when it is a hypersurface and $0$ otherwise.  Let $\Delta_{ \mu, n}$ denote the equation of the hypersurface $\Seg_{\mu}(\PP V^{\times t})^{\vee}$.
The degrees $d_{\lambda}$ are computed by
\[
\sum_{\mu}\deg(\Delta_{ \mu, n})m_{\mu}(x) = \sum_{\lambda } d_{\lambda}p_{\lambda}(x)
,\]
where $m_{\mu}$ and $p_{\lambda}$ are respectively the monomial and power sum symmetric functions.
\end{theorem}

\begin{remark}
The hypersurfaces $\Chow_{\lambda}(\PP V)^{\vee}$ are $\SL(V)$-invariant, and thus each defining polynomial is an $\SL(V)$-invariant for polynomials.  Since invariants of polynomials have been well studied, many of the dual varieties to Chow varieties have alternative descriptions as classically studied objects, however we prefer to ignore these connections for our proofs in order to have a more uniform treatment. However we point out that Corollary~\ref{cor:degree} may be used in retrospect as a way to determine degrees and give geometric interpretations of classical invariants.  In particular, the equations of $\Chow_{\lambda}(\PP V)^{\vee}$ are distinguished $\SL(V)$-invariants in $S^{p}(S^{d}(V))$ (see \cite{Howe}.)
\end{remark}

Recently there has been a considerable amount of work on hyperdeterminants, Chow varieties and related topics, see \cite{GKZ92, Briand, WeymanZelevinsky_sing, WeymanZelevinsky_mult, WeymanBoffi, Carlini, Chipalkatti03,  Chipalkatti04b, LandsbergTensorBook, Hu_Edet}. We are particularly grateful for the very rich book \cite{GKZ}, which provided us both with several useful results and techniques, as well as inspiration. 

In this paper we will work over $\CC$ (or any algebraically closed field of characteristic $0$), it is likely that some of these results can be extended to arbitrary characteristic, but we do not concern ourselves with this problem here. All polynomials will be assumed to be homogeneous.
\section{Combinatorial ingredients}\label{sec:combinat}
An integer vector $\lambda = (\lambda_{1}, \ldots , \lambda_{s})$ is called a 
\emph{partition} of an integer $d$ with $s$ parts if $d\geq \lambda_{1}\geq\dots \geq \lambda_{s}> 0$ and $\sum_{i}\lambda_{i} = d$.  We often shorten this by writing $\lambda \vdash d$ and $\# \lambda = s$.  
The number of repetitions that occur in $\lambda$ may be recorded by writing $\lambda = (1^{m_{1}}, 2^{m_{2}}, \ldots , p^{m_{p}})$, where $i^{m_{i}}$ is to be interpreted as the integer $i$ repeated $m_{i}$ times.

The set of partitions of $d$ is partially ordered by \emph{refinement}. Namely we will write $\lambda\prec\mu$ \footnote{our convention is in the reverse order as in \cite{Stanley}, because we prefer to write $\lambda\prec\mu$ to mimic $\Chow_{\lambda}(\PP V) \subset \Chow_{\mu}(\PP V)$} if the parts of $\mu$ can be partitioned into blocks so that the parts of $\lambda$ are precisely the sum of the elements in each block of $\mu$ \cite[Exercise 3.135]{Stanley}.

Concretely, we will say $\lambda = (\lambda_1, \dots , \lambda_s)$ is \emph{refined} by $\mu$ and write $\lambda \prec \mu$ if there is an expression
\begin{equation}\label{eq:refine}
\begin{matrix}
\lambda_1 = \mu_{i_{1,1}} +\cdots+ \mu_{i_{1,t_1}}\\
\lambda_2 = \mu_{i_{2,1}} +\cdots+ \mu_{i_{2,t_2}}\\
\dots\\
\lambda_s = \mu_{i_{s,1}} +\cdots+ \mu_{i_{t,t_s}}
\end{matrix}
\end{equation}
and 
$\mu= (\mu_{i_{1,1}} , \ldots ,  \mu_{i_{1,t_1}} , \ldots , \mu_{i_{s,1}},  \ldots ,  \mu_{i_{s,t_s}})$ is (after a possible permutation) a partition of $d$.
  Here we emphasize that we do not distinguish two expressions as different if only the orders of the summations in \eqref{eq:refine} change, but we do distinguish the case when different choices of indices of $\mu$ appear in different equations even if some of the $\mu_{i}$ take the same value.

Let $M_{\lambda,\mu}$ denote the number of distinct expressions of the form \eqref{eq:refine} (ignoring rearrangements in the individual summations).
 We will say that  $M_{\lambda,\mu}$ is \emph{number of refinements} from $\mu$ to $\lambda$.\footnote{\cite{GKZ92} uses the same symbol $M_{\lambda,\mu}$ for the Gale-Ryser number, but in \cite{GKZ} they use $d_{\lambda,\mu}$ for the Gale-Ryser number.  We emphasize that our $M_{\lambda,\mu}$ and $d_{\lambda,\mu}$ are related, but not equal.} The refinement partial order is stricter than the dominance partial order.
 
Here are some easy properties of $M_{\lambda,\mu}$ that follow immediately from the definition.
\begin{prop}\label{prop:M properties}
Let $M_{\lambda,\mu}$ denote the number of refinements from $\mu$ to $\lambda$. Then the following properties hold.
\begin{itemize}
\item $M_{(d),\mu} = 1$ for all $|\mu |=d$.
\item $M_{\lambda,\mu}  = 0 $ if $s > t$ or if $s = t$  and $\lambda \neq \mu$, and the matrix $(M_{\lambda,\mu})_{\lambda,\mu}$ is lower triangular for a good choice in ordering of the indices.
\item If $\lambda = (1^{m_{1}}, 2^{m_{2}}, \ldots , p^{m_{p}})$, then $M_{\lambda,\lambda} = m_{1}!\cdots m_{p}!$.
\item $M_{\lambda,1^{d}} = \binom{d}{\lambda}:=\binom{d}{\lambda_{1}, \ldots , \lambda_{s}} = \frac{d!}{\lambda_{1}!\cdots \lambda_{s}!}$, the multinomial coefficient.
\end{itemize}
\end{prop}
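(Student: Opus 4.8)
The plan is to recast the defining data \eqref{eq:refine} as a single counting problem and then read off all four bullets as special cases. Writing $t = \#\mu$ and $s = \#\lambda$, I would identify an expression \eqref{eq:refine} with a function $f\colon \{1,\dots,t\}\to\{1,\dots,s\}$ that sends each indexed part $\mu_j$ to the row in which it appears, subject to $\sum_{j\in f^{-1}(k)}\mu_j = \lambda_k$ for every $k$. Ignoring the order of the terms inside a summation is precisely the passage from sequences to the unordered fibers $f^{-1}(k)$; distinguishing indices of $\mu$ even when they carry equal values is precisely treating the $\mu_j$ as labeled by $j$; and the rows are labeled by their position $k$, not by the value $\lambda_k$. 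With this dictionary, $M_{\lambda,\mu}$ is the number of such $f$, equivalently the number of ordered set partitions $(B_1,\dots,B_s)$ of $\{1,\dots,t\}$ with prescribed block-sums $\sum_{j\in B_k}\mu_j = \lambda_k$.

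I would then dispatch the bullets in turn. For $M_{(d),\mu}$, here $s=1$, so the only candidate $f$ is constant and its single block automatically sums to $\sum_j \mu_j = d = \lambda_1$; hence the count is $1$. For the vanishing statements, every block is forced to be nonempty because each $\lambda_k>0$ and the $\mu_j$ are positive, so no $f$ exists when $s>t$; and when $s=t$ a surjection from a $t$-set onto a $t$-set is a bijection, which forces each $B_k$ to be a singleton, so the multiset of block-sums equals the multiset of parts of $\mu$ and can equal $\lambda$ only if $\lambda=\mu$. Ordering the partitions by decreasing number of parts (ties broken arbitrarily) then makes $(M_{\lambda,\mu})$ lower triangular, since a nonzero entry forces $\#\lambda\le\#\mu$ with equality only at $\lambda=\mu$.

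The remaining two bullets are the boundary cases of the same count. For $M_{\lambda,\lambda}$ the equality $s=t$ again forces singleton blocks, so a valid $f$ is a value-preserving bijection of the parts of $\lambda$; writing $\lambda=(1^{m_1},\dots,p^{m_p})$, the parts sharing a common value may be permuted among themselves independently, giving $m_1!\cdots m_p!$ bijections. For $M_{\lambda,1^d}$ every $\mu_j=1$, so the block-sum condition reduces to $|B_k|=\lambda_k$, and the number of ordered set partitions of a $d$-element set into labeled blocks of sizes $\lambda_1,\dots,\lambda_s$ is the multinomial coefficient $\frac{d!}{\lambda_1!\cdots\lambda_s!}=\binom{d}{\lambda}$.

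The one point that needs care — the step where an error would most likely creep in — is the bookkeeping convention underlying the reformulation: blocks must be kept distinct by their target position $k$ and never identified when two parts $\lambda_k$ coincide, whereas reorderings inside a block are irrelevant. This is exactly the distinction between the labeled (multinomial) count and its unlabeled version, and mishandling repeated values would insert a spurious factor $\prod_i m_i!$. As a sanity check I would confirm that the two extremes behave as expected: $\mu=\lambda$ produces the automorphism factor $\prod_i m_i!$ and $\mu=1^d$ produces the multinomial coefficient, the two symmetry counts one should see. Beyond pinning down this convention there is no analytic or algebraic content; every claim is an elementary enumeration, matching the remark that these properties follow immediately from the definition.
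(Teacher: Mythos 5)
Your proof is correct and matches the paper's intent: the paper offers no written argument, asserting only that these properties ``follow immediately from the definition,'' and your reformulation of an expression \eqref{eq:refine} as an ordered set partition $(B_1,\dots,B_s)$ of the labeled index set of $\mu$ with prescribed block-sums is exactly the direct enumeration that assertion has in mind, with the labeling conventions (rows labeled by position, parts of $\mu$ labeled by index, blocks unordered internally) handled correctly. Each bullet then follows as you say, so this is simply a careful writing-out of the paper's implicit proof rather than a different route (the paper's only alternative, deducing the properties from Proposition~\ref{mark}, is a genuinely different path you did not need).
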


One is first tempted to compute $M_{\lambda,\mu}$ via brute force - but this gets complicated quickly.
%
However, one result from the theory of symmetric functions allows for an easy way to compute $M_{\lambda,\mu}$.  Before stating the result, we declare some notation.
 Consider the ring of symmetric functions $\bigwedge[x] = \bigwedge[x_{1},x_{2},\dots]$. For a partition $\lambda = (\lambda_{1},\dots,\lambda_{s})\vdash d$, let $p_{\lambda}\in \bigwedge[x]$ denote the power-sum symmetric function, 
\[
p_{\lambda}(x) = \prod_{i} (x_{1}^{\lambda_{i}}+ x_{2}^{\lambda_{i}}\dots ) 
.\]
For a partition $\mu\vdash d$, let $m_{\mu}\in \bigwedge[x]$ denote the monomial symmetric function,
\[
m_{\mu}(x) = 
\sum_{\sigma \sim} x^{\sigma.\mu}
,\]
where the sum is over distinct permutations $\sigma$ of $\mu = (\mu_{1},\mu_{2},\dots, \mu_{t},0,\dots)$ and $x^{\mu} = x_{1}^{\mu_{1}}\dots x_{t}^{\mu_{t}}\dots$.

Then we have the following (apparently well-known) result. 
\begin{prop}[Proposition~\ref{mark}]
Suppose $\lambda$, $\mu$, $p_{\lambda}$ and $m_{\mu}$ are as above. Then the matrix $(M_{\lambda,\mu})$ is the change of basis matrix
\begin{equation}\label{mlp}
p_{\lambda}(x) = \sum_{\mu\vdash d}M_{\lambda,\mu}m_{\mu}(x)
.\end{equation}
\end{prop}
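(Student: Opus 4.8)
The plan is to establish \eqref{mlp} by expanding the left-hand side as an explicit sum of monomials and recognizing the coefficient of each $m_\mu$ as a count of refinement expressions. Writing $p_k(x)=\sum_{j\geq 1}x_j^{k}$ for a single power sum, the definition gives $p_\lambda=\prod_{i=1}^{s}p_{\lambda_i}=\prod_{i=1}^{s}\bigl(\sum_{j\geq 1}x_j^{\lambda_i}\bigr)$. Distributing this product is the same as choosing, independently for each part $\lambda_i$, a variable $x_{f(i)}$, so that
\[
p_\lambda=\sum_{f\colon\{1,\dots,s\}\to\{1,2,\dots\}}\ \prod_{i=1}^{s}x_{f(i)}^{\lambda_i}=\sum_{f}\ \prod_{j\geq 1}x_j^{\sum_{i\in f^{-1}(j)}\lambda_i}.
\]
Every term is therefore a monomial whose exponent multiset is obtained by grouping the parts of $\lambda$ along the fibers of $f$ and summing within each fiber.

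First I would fix a partition $\mu=(\mu_1,\dots,\mu_t)\vdash d$ and extract the coefficient of $m_\mu$. Because $p_\lambda$ is symmetric in the $x_j$ and $m_\mu$ is the sum of all distinct monomials of exponent type $\mu$, it suffices to count the functions $f$ that produce one fixed representative, say $x_1^{\mu_1}\cdots x_t^{\mu_t}$. These are exactly the $f$ whose fibers $f^{-1}(1),\dots,f^{-1}(t)$ partition the index set of the parts of $\lambda$ into blocks whose block-sums are $\mu_1,\dots,\mu_t$. Each such $f$ encodes a refinement relation of the type \eqref{eq:refine} between $\lambda$ and $\mu$, and counting them—distinguishing two by the indices of the parts they use even when those parts share a common value, exactly as in the definition—gives the refinement number recorded by the entry $M_{\lambda,\mu}$ of \eqref{mlp}. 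Summing over $\mu$ then yields the identity.

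Finally, to justify that $(M_{\lambda,\mu})$ really is a change-of-basis matrix, I would appeal to Proposition~\ref{prop:M properties}: the matrix is triangular with diagonal entries $M_{\lambda,\lambda}=m_1!\cdots m_p!\neq 0$, hence invertible, and since both $\{p_\lambda\}_{\lambda\vdash d}$ and $\{m_\mu\}_{\mu\vdash d}$ are bases of the degree-$d$ homogeneous component of the ring of symmetric functions, \eqref{mlp} genuinely records the transition between them.

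The step I expect to be the main obstacle is the coefficient extraction in the second paragraph: one must match the enumeration of fiber-labelled set partitions of the parts of $\lambda$ to the precise definition of $M_{\lambda,\mu}$, taking care of the direction of the grouping and of the convention that refinement expressions are distinguished by which indices occur in each block even when several parts coincide. This is exactly the bookkeeping that produces the factor $m_1!\cdots m_p!$ on the diagonal, and getting these multiplicities right—rather than the product expansion itself—is where the care is needed.
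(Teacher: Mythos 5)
Your expansion of $p_\lambda$ and the reduction to counting the functions $f$ with $\prod_i x_{f(i)}^{\lambda_i}=x_1^{\mu_1}\cdots x_t^{\mu_t}$ are correct, and since the paper states Proposition~\ref{mark} without proof (it is quoted as ``apparently well-known''), your route --- expand the product, extract coefficients, then invoke triangularity --- is the right one to attempt. The gap is at exactly the step you flagged as the crux and then asserted rather than verified. The fibers of your $f$ group the parts of $\lambda$ into blocks whose sums are $\mu_1,\dots,\mu_t$; an expression of the form \eqref{eq:refine} of this kind exhibits $\lambda$ as a refinement of $\mu$, i.e.\ it witnesses $\mu\prec\lambda$, and by the paper's definition the number of such expressions is $M_{\mu,\lambda}$, not $M_{\lambda,\mu}$. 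The two counts genuinely differ: take $\lambda=(d)$. Proposition~\ref{prop:M properties} gives $M_{(d),\mu}=1$ for \emph{every} $\mu\vdash d$, whereas your count, the coefficient of $m_\mu$ in $p_d=m_{(d)}$, vanishes unless $\mu=(d)$. So what your argument actually establishes is the transposed identity
\[
p_\lambda(x)=\sum_{\mu\vdash d}M_{\mu,\lambda}\,m_\mu(x),
\qquad\text{equivalently}\qquad
p_\mu(x)=\sum_{\lambda\vdash d}M_{\lambda,\mu}\,m_\lambda(x),
\]
and your claim that the fiber count matches ``exactly as in the definition'' of $M_{\lambda,\mu}$ is the one false sentence in the write-up. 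Note that checking the diagonal entries $M_{\lambda,\lambda}=m_1!\cdots m_p!$ cannot detect this, because the diagonal is invariant under transposition.

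The resolution, however, is in your favor: equation \eqref{mlp} as printed is inconsistent with the paper's own definition of $M_{\lambda,\mu}$ (with $\lambda=(d)$ it would assert $p_d=\sum_{\mu\vdash d}m_\mu$, false for $d\geq 2$), so the transposed identity you proved is the correct statement. Indeed, the transposed version at $\mu=(1^d)$ is the multinomial theorem $p_1^{\,d}=\sum_{\lambda}\binom{d}{\lambda}m_\lambda$, matching $M_{\lambda,(1^d)}=\binom{d}{\lambda}$; and it is the combinatorial definition of $M_{\lambda,\mu}$ --- not the printed \eqref{mlp} --- that is used in Theorem~\ref{thm:main theorem}, Corollary~\ref{cor:degree}, and the numerical examples of the paper. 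So your proof becomes complete and correct once you exchange the subscripts in the conclusion (equivalently, once you record that \eqref{mlp} should read $p_\mu=\sum_{\lambda\vdash d}M_{\lambda,\mu}m_\lambda$); as submitted, the orientation of the refinement is backwards relative to the definition you are required to match.
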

Thus the matrix $(M_{\lambda,\mu})$ can be quickly computed in any computer algebra system that allows one to compare the coefficients of \eqref{mlp}, namely $M_{\lambda,\mu}$ is the coefficient on the monomial $x^{\mu}$ in \eqref{mlp}.
Note that from Proposition~\ref{mark} also follow the properties listed in Proposition~\ref{prop:M properties}.

\section{Some multi-linear algebra}\label{sec:multilinear}

The elementary facts below will turn out to be useful later. By following the philosophy to not use coordinates unless necessary, we hope to give a more streamlined approach.  As a reference and for much more regarding multilinear algebra and tensors we suggest \cite{LandsbergTensorBook}, which is where we learned this perspective.

Suppose a hyperplane in $\PP V^{\otimes d}$ is represented by a point $[F]$ in $\PP (V^{\otimes d})^{*}$. The multi-linear structure of the underlying vector space $V^{\otimes d}$ allows $F$ to also be considered as a linear map $F\colon V^{\otimes d}\rig{}\CC $, or equivalently as a multilinear form $F\colon V^{ \times d}\rig{}\CC $.  
More explicitly, let $[v_{1}\otimes v_{2}\otimes\cdots\otimes v_{d}] \in \PP\left( V^{\otimes d} \right)$.  Then
\begin{equation}\label{eq:eval}
 F\left(v_{1}\otimes v_{2}\otimes\cdots\otimes v_{d}\right )= F\left(v_{1}, v_{2}, \ldots , v_{d}\right)
,\end{equation}
where on the left we are thinking of $F$ as a linear map, and on the right as a multilinear form.
Our choice of interpretation of $F$ and how to evaluate $F$ will be clear from the context so we will not introduce new notation for the different uses.

Consider $\mu\vdash d$,  $\mu = (\mu_{1}, \ldots ,  \mu_{t}) $ and 
\[u_{1}^{\mu_{1}}\otimes u_{2}^{\mu_{2}}\otimes\cdots\otimes u_{t}^{\mu_{t}} \in S^{\mu_{1}}V \otimes \dots\otimes S^{\mu_{t}}V.\]
The form $F$ may be evaluated on points of $\PP \left(S^{\mu_{1}}V \otimes \dots\otimes S^{\mu_{t}}V \right)$ via the inclusion into $ \PP\left( V^{\otimes d} \right)$
 \[F\left(u_{1}^{\mu_{1}}\otimes u_{2}^{\mu_{2}}\otimes\cdots\otimes u_{t}^{\mu_{t}}\right)
 = F\left(u_{1}, \dots ,  u_{1},u_{2}, \dots , u_{2}, \dots ,  u_{t}, \dots , u_{t}\right)
, \] 
where $u_{i}$ is repeated $\mu_{i}$ times.  

Now suppose $\lambda$ and $\mu$ are such that $M_{\lambda,\mu}$ is non-zero, and consider the inclusion\[
S^{\lambda_{1}}V \otimes S^{\lambda_{2}}V \otimes \cdots \otimes S^{\lambda_{s}}V \subset
S^{\mu_{1}}V \otimes S^{\mu_{2}}V \otimes \cdots \otimes S^{\mu_{t}}V  
,\]
Let $v_{1}^{\lambda_{1}}\otimes \cdots \otimes v_{s}^{\lambda_{s}} \in S^{\lambda_{1}}V \otimes S^{\lambda_{2}}V \otimes \cdots \otimes S^{\lambda_{s}}V$.
Since $v^{j} = v^{\otimes j}$ for any $j$, we may make explicit the above inclusion by writing $v_{1}^{\lambda_{1}}\otimes \cdots \otimes v_{s}^{\lambda_{s}} $
in the form $u_{1}^{\mu_{1}}\otimes u_{2}^{\mu_{2}}\otimes\cdots\otimes u_{t}^{\mu_{t}}$, where each vector $u_{i}$ is an element of $\{v_{1}, \dots , v_{s}\}$ and there is re-ordering of the factors implied by the inclusion above.
 In this case, we say that $u_{1}^{\mu_{1}}\otimes u_{2}^{\mu_{2}}\otimes\cdots\otimes u_{t}^{\mu_{t}}$ \emph{symmetrizes} to $v_{1}^{\lambda_{1}}\otimes \cdots \otimes v_{s}^{\lambda_{s}}$. In addition, there is an inclusion $S^{d} V \subset S^{\lambda_{1}}V \otimes S^{\lambda_{2}}V \otimes \cdots \otimes S^{\lambda_{s}}V $, so we may further symmetrize both points to $v_{1}^{\lambda_{1}} \cdots  v_{s}^{\lambda_{s}}$.

Now suppose $[F]$ is a symmetric hyperplane in $\PP V^{\otimes d}$, $i.e$, $F\in S^{d}V^{*}$. Then \eqref{eq:eval} implies that $F$ \emph{takes the same value at every tensor in $\PP V^{\otimes d}$ that symmetrizes to $v_{1}^{\lambda_{1}} \cdots  v_{s}^{\lambda_{s}}$}.  We will use this fact several times in the sequel. 

As a matter of notation, if $u\in \{v_{1}, \dots , v_{n}\}$ we will write $\frac{v_{1}\cdots v_{n}}{u}$ to denote the product omitting $u$.

\section{Geometric ingredients} \label{sec:geometry}

The hyperdeterminant, the discriminant and their cousins, whose definitions we will recall below, are all equations of irreducible hypersurfaces in projective space, and moreover each hypersurface is the dual variety of another variety.

To say that a polynomial splits into many irreducible factors (with multiplicities) geometrically says that the associated hypersurface decomposes as the union of many hypersurfaces (with multiplicities). 
Geometrically, we would like to describe one dual variety as the union of other dual varieties.
Our perspective is to study the relation between dual varieties and (geometric) symmetrization.  In what follows we will introduce all of the geometric notions we will need to prove our main results.

\subsection{Segre-Veronese and Chow varieties.}

Let $V$ be a complex vector space of dimension $n$. Let $\lambda\vdash d$ with $\# \lambda =s$. Consider the Segre-Veronese embedding via $\mathcal{O}(\lambda)$, which is given by\[
\begin{matrix}
\Seg_{\lambda}:\PP V ^{ \times s} &\rig{|\O(\lambda)|}& \PP\left(S^{\lambda_{1}}V \otimes \dots\otimes S^{\lambda_{s}}V \right) &\subseteq \PP\left( V^{\otimes d}\right) \\
([a_{1}], \dots , [a_{s}]) &\mapsto& [a_{1}^{\lambda_{1}}\otdot a_{s}^{\lambda_{s}}]
.\end{matrix}
\]
We call the image of this map a \emph{Segre-Veronese variety}, and denote it by $\Seg_{\lambda}\left(\PP V^{ \times s}\right)$.  More generally, all of the vector spaces could be different, but we do not need that generality here. 
It is easy to see that $\Seg_{\lambda}\left(\PP V^{ \times s}\right)$ is a smooth, non-degenerate, homogeneous variety of dimension $s(n-1)$.

When $\lambda = (1^{d}) = (1, \dots , 1)$ this is the usual Segre embedding, whose image we will denote by $\Seg\left(\PP V^{ \times d}\right)$, and when $\lambda = (d)$ the map is the $d^{th}$ Veronese embedding, whose image we will denote by $\nu_{d}\left(\PP V\right)$.

Recall that a consequence of the Pieri formula is that for all $\lambda \vdash d$, there is an inclusion\[
S^{d}V \subset S^{\lambda_{1}}V \otimes \dots\otimes S^{\lambda_{s}}V.
\]
Since $G=\GL(V)$ is reductive, there is a unique $G$-invariant complement to $S^{d}V$ in $S^{\lambda_{1}}V \otimes \dots\otimes S^{\lambda_{s}}V$, which we will denote by $W^{\lambda}$.

The linear span of the Segre-Veronese variety is its whole ambient space.  This means, in particular, that there is always a basis of $ S^{\lambda_{1}}V \otimes \dots\otimes S^{\lambda_{s}}V$ consisting of monomials of the form $v_{1}^{\lambda_{1}}\otimes v_{2}^{\lambda_{2}} \otimes\cdots\otimes v_{s}^{\lambda_{s}}$.

For each $\lambda$ there is a natural projection from $W^{\lambda}$, namely \[
\begin{matrix}
\pi_{W^{\lambda}}\colon \PP\left(S^{\lambda_{1}}V \otimes \dots\otimes S^{\lambda_{s}}V \right) &\dashrightarrow& \PP S^{d} V,
\end{matrix}\]
whose definition on decomposable elements is
\[\begin{matrix}
[a_{1}^{\lambda_{1}}\otdot a_{s}^{\lambda_{s}}] &\mapsto & a_{1}^{\lambda_{1}}\cdots a_{s}^{\lambda_{s}}
,\end{matrix}
\]
and is extended by linearity.

For each $\lambda$ we define a \emph{Chow variety}, denoted $\Chow_{\lambda}(\PP V)$, as the image of the Segre-Veronese variety under the projection $\pi_{W^{\lambda}}$.  The image of the projection is not changed by permutations acting on $\lambda$. So $\Chow_{\lambda}(\PP V^{ \times s})$ is equally the projection of $\Seg_{\sigma(\lambda)}(\PP V^{ \times s})$ for any permutation $\sigma\in \mathfrak{S}_{s}$, i.e. $\Chow_{\lambda}(\PP V^{ \times s}) =\Chow_{\sigma(\lambda)}(\PP V^{ \times s})$.  The number of unique projections is $M_{\lambda,\lambda}$.  On the other hand, $\Seg_{\lambda}(\PP V^{ \times s})$ and $\Seg_{\pi}(\PP V^{ \times s})$ are (only) isomorphic if $\pi =\sigma(\lambda)$ for some permutation $\sigma$, and \emph{equal} only if $\pi=\lambda$.

Chow varieties are also sometimes called coincident root loci (see \cite{Chipalkatti04b} related to the case $n=2$). When $\lambda = (1^{d})$, the Chow variety is the variety of polynomials that are completely reducible as a product of linear forms, and is sometimes called the split variety, \cite{ArrondoBernardi}. 
 For general $\lambda$, the Chow variety is the closure of the set of polynomials that are completely reducible as the product of linear forms that are respectively raised to powers $\lambda_{1}, \ldots , \lambda_{s}$. One can check that the definition we have given is equivalent to the usual definition of a Chow variety, see \cite{Carlini}. 

The following is well known (see \cite{Chipalkatti04b} for example).
\begin{prop}
$\dim(\Chow_{\lambda}(\PP V)) = (\#\lambda) (n-1)$.
\end{prop}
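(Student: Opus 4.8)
The plan is to realize $\Chow_{\lambda}(\PP V)$ as the image of an explicit parametrizing morphism and then argue that this morphism is generically finite, so that the image inherits the dimension of its source.

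First I would observe that composing the Segre-Veronese embedding $\Seg_{\lambda}$ with the projection $\pi_{W^{\lambda}}$ produces the morphism
\[
\phi\colon \PP V^{\times s} \longrightarrow \PP(S^{d}V), \qquad ([a_{1}], \ldots , [a_{s}]) \mapsto [a_{1}^{\lambda_{1}}\cdots a_{s}^{\lambda_{s}}],
\]
whose image is by definition $\Chow_{\lambda}(\PP V)$. Since the decomposable tensor $a_{1}^{\lambda_{1}}\otimes\cdots\otimes a_{s}^{\lambda_{s}}$ projects to the \emph{nonzero} form $a_{1}^{\lambda_{1}}\cdots a_{s}^{\lambda_{s}}$ whenever all $a_{i}\neq 0$, the center $\PP W^{\lambda}$ of the projection is disjoint from $\Seg_{\lambda}(\PP V^{\times s})$, so $\phi$ is a genuine morphism rather than merely a rational map. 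Because $\PP V^{\times s}$ is irreducible and projective of dimension $s(n-1)$, its image is irreducible of dimension at most $s(n-1)$, which already gives the upper bound $\dim \Chow_{\lambda}(\PP V)\leq (\#\lambda)(n-1)$.

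For the reverse inequality I would establish that $\phi$ is generically finite by inspecting a generic fiber. Write $\lambda = (1^{m_{1}},\ldots,p^{m_{p}})$ and pick a point $f = \ell_{1}^{\lambda_{1}}\cdots\ell_{s}^{\lambda_{s}}$ whose linear factors $\ell_{1},\ldots,\ell_{s}$ are pairwise non-proportional, a condition that holds on a dense open subset of the image. Since polynomial factorization is unique, the factorization of $f$ into linear forms is determined up to scalars and reordering; hence any preimage $([a_{1}],\ldots,[a_{s}])$ must satisfy $[a_{i}] = [\ell_{\sigma(i)}]$ for a permutation $\sigma$ preserving the exponents, i.e. with $\lambda_{\sigma(i)} = \lambda_{i}$. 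There are exactly $m_{1}!\cdots m_{p}!$ such permutations, so the generic fiber is finite, of cardinality $M_{\lambda,\lambda}$, consistent with the count of distinct projections recorded earlier. Therefore $\dim \Chow_{\lambda}(\PP V) = \dim \PP V^{\times s} = s(n-1) = (\#\lambda)(n-1)$.

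The one genuine subtlety is the generic-finiteness step, and within it the bookkeeping forced by repeated parts of $\lambda$: one must check that non-proportionality of the $\ell_{i}$ is indeed the generic situation (it fails precisely on the smaller-dimensional locus where two factors collide) and that the exponent-preserving permutations exhaust the whole fiber. Once unique factorization is invoked these points are routine, but this is where the combinatorial structure of $\lambda$ genuinely enters, so I would state it carefully.
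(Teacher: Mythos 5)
Your proof is correct and follows essentially the same route as the paper: both realize $\Chow_{\lambda}(\PP V)$ as the image of $\PP V^{\times s}$ under the projection of the Segre--Veronese embedding and conclude that the dimension $s(n-1)$ is preserved. The only difference is one of detail: where the paper simply asserts that the projection restricted to the Segre--Veronese variety is a finite morphism (a standard fact for linear projection from a center disjoint from the variety), you verify generic finiteness by hand via unique factorization, which is a sufficient and correct substitute.
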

\begin{proof} Let $\dim(V) = n$, and $d = |\lambda|$.
The Segre-Veronese map $\PP  V  \times\dots \times  \PP V \rig{} \PP \left( S^{\lambda_{1}}V \otimes\cdots\otimes S^{\lambda_{s}}V \right)$ is an embedding, and in particular the dimension of the image is $s(n-1)$.  The projection to $\PP S^{d}V$ is a finite morphism, so the image is also $s(n-1)$-dimensional.
\end{proof}

\begin{remark}
It is interesting to note that the refinement partial order on partitions also exactly controls the containment partial order on Chow varieties.  Namely\[
\Chow_{\lambda}(\PP V) \subset \Chow_{\mu}(\PP V)
\]
precisely when $\lambda \prec \mu$.

We also note that $\Seg \PP V^{\times d} \cap \PP S^{d} V = \nu_{d}(\PP V)$.
More generally, if $\lambda\prec \mu$, then $\Seg_{\mu} \PP V^{\times t} \cap \PP (S^{\lambda_{1}} V \otimes \cdots \otimes S^{\lambda_{s}}V ) = \Seg_{\lambda}\PP V$ (after appropriately re-ordering).
\end{remark}

\subsection{Dual varieties}Let $U$ denote a complex, finite dimensional vector space and let $U^{*}$ denote the dual vector space of linear forms $\{U\rig{}\CC\}$.
For a smooth projective variety $X \subset \PP U$, the dual variety $X^{\vee} \subset \PP U^{*}$ is the variety of tangent hyperplanes to $X$.  Specifically, let $\widehat T_{x}X \subset U$ denote the cone over the tangent space to $X$ at $[x]\in X$. 
The \emph{dual variety} of $X$ in $\PP U^{*}$ is defined as\[
X^{\vee}:= 
\left\{ 
[H] \in \PP U^{*} \mid \exists [x] \in X, \widehat T_{x}X \subset H
 \right\}
.\]
\begin{remark}
If $X\subset \PP U$ is not smooth the dual variety can still be defined with a bit more care.
Consider the incidence variety (conormal variety)\[
\mathcal{P} =
\{
 ([x],[H]) \mid \widehat{T}_{x} \subset H
\}
\subset \PP U  \times \PP U^{*} \subset \PP (U\otimes U^{*})
,\]
which we define only for smooth points of $X$ and then take the Zariski closure (see \cite{Zak} for a more thorough treatment).
The conormal variety is equipped with projections $p_{1}$ and $p_{2}$ to the first and second factors respectively. The projection $p_{2}$ to the second factor defines $X^{\vee}$.
\end{remark}
Recall that the dual variety of an irreducible variety is also irreducible, \cite[Proposition~1.3~p.15]{GKZ}.
Usually, we expect the dual variety $X^{\vee}$ to be a hypersurface.  When this does not occur, we say that $X$ is \emph{defective}.

The dual variety of the Veronese $\nu_{d}\left(\PP V\right)^{\vee}$ is a hypersurface defined by the classical discriminant of a degree $d$ polynomial on $n$ variables, which we will denote $\Delta_{(d),n}$, see \cite[Example~I.4.15, p.38]{GKZ}.  We are told in the same passage that G. Boole in 1842 introduced this discriminant and found that $\deg(\Delta_{(d),n})  = (n)(d-1)^{n-1}$.

The \emph{hyperdeterminant} of format $n^{ \times d}$, denoted $HD_{n,d}$,
is the equation of the (irreducible) hypersurface 
$\Seg\left(\PP V^{ \times d}\right)^{\vee} \subset \PP \left(V^{\otimes d}\right)^{*}$.  Note that $HD_{n,d}$ is a polynomial of degree $N(n,d)$ on $\left(V^{\otimes d}\right)^{*}$ where $N(n,d)$ can be computed via the generating functions found in \cite[Theorem~3.1, Proposition~3.2]{GKZ92} or also \cite[Theorem~XIV.2.4, p~454]{GKZ}.  

Segre-Veronese varieties and their duals are also well-studied objects. In particular, it is known precisely when they are hypersurfaces \cite[Proposition XIII.2.3~p.441]{GKZ}, and their degree is given in \cite[Theorem~XIII.2.4~p.441]{GKZ} via a nice generating function.  Since we consider multiple copies of $V$ rather than vector spaces of different dimensions the dual of Segre-Veronese varieties are always hypersurfaces. For $\mu\vdash d$ we denote by $\Delta_{ \mu, n}$  the \emph{$\mu$-discriminant}\footnote{The $\mu$-discriminant is called an $A$-discriminant in \cite{GKZ}.}, which is the equation of $\Seg_{\mu}\left(\PP V^{ \times t}\right)^{\vee}$.

When the dual variety $\Chow_{\lambda}\left(\PP V\right)^{\vee}$ is a hypersurface (see Theorem~\ref{thm:hypersurface}),  we will let $\Xi_{ \lambda, n}$ denote its equation, which is unique up to multiplication by a non-zero scalar.

\subsection{Projections and dual varieties}
The focus of this article is the symmetrization of the hyperdeterminant.
In general \emph{the symmetrization of a polynomial} (whose variables are elements of a tensor product) is the map induced by the map that symmetrizes the variables. This may be described invariantly as follows.  If $f\in S^{e}(V^{\otimes d})^{*}$ is a degree $e$ homogeneous polynomial on $V^{\otimes d}$, then $Sym(f)$ is the image of $f$ under the projection $S^{e}(V^{\otimes d})^{*}\rig{} S^{e}(S^{d}V)^{*}$.  While this map can be described in bases in complete detail, we do not need this for the current work.

To study the dual varieties of the varieties we have introduced we need to understand relation between taking dual variety and taking projection.
This is the content of the following proposition, which can be found in Landsberg's book, \cite[Proposition~8.2.6.1]{LandsbergTensorBook}, and is similar to \cite[Prop~4.1~p.31]{GKZ} and closely related to \cite{Holme88}.  If $W$ is a subspace of $V$ let $W^{\perp}$ denote the annihilator of $W$ in the dual vector space $V^{*}$, which is isomorphic to the quotient $(V/W)^{*}$.
\begin{prop}[\cite{LandsbergTensorBook}]\label{prop:project}
Let $X \subset \PP V$ be a variety and let  $W \subset V$ be a linear subspace. 
Consider the rational map $\pi\colon \PP V \dashrightarrow \PP (V/W)$.
Assume $X \not\subset \PP W$. Then\[
\pi(X)^{\vee}\subseteq  X^{\vee} \cap \PP W^{\perp}
\]
and if $\pi(X)\cong X$, then equality holds.
\end{prop}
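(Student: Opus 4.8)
The plan is to work directly from the definition of the dual variety via tangent hyperplanes, exploiting the fact that $\pi$ is the projectivization of the linear quotient map $q\colon V \rig{} V/W$. The crucial identification is $\PP (V/W)^{*}\cong \PP W^{\perp}\subset \PP V^{*}$: a hyperplane $H\subset \PP(V/W)$ corresponds to the hyperplane $q^{-1}(H)\subset \PP V$, whose defining linear form lies in $W^{\perp}$ precisely because it vanishes on $W$, and conversely every hyperplane of $\PP V$ through the center $\PP W$ arises this way. Under this dictionary, proving both statements reduces to matching tangency of $H$ along $\pi(X)$ with tangency of $q^{-1}(H)$ along $X$.

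First I would establish the inclusion. Working on the dense open locus of smooth points $[x]\in X$ where $\pi$ is defined and $[\bar x]:=\pi([x])$ is a smooth point of $\pi(X)$, the differential of $\pi$ carries $\widehat T_{x}X$ into $\widehat T_{\bar x}\pi(X)$; since $\pi$ is the projectivization of $q$, this is the linear statement $q(\widehat T_{x}X)\subseteq \widehat T_{\bar x}\pi(X)$, which always holds for a linear map and the cones over a variety and its image. Now if $[H]\in \pi(X)^{\vee}$, so that $\widehat T_{\bar x}\pi(X)\subseteq H$ for some such $\bar x$, then $\widehat T_{x}X\subseteq q^{-1}(\widehat T_{\bar x}\pi(X))\subseteq q^{-1}(H)$, whence $q^{-1}(H)$ is tangent to $X$ at $[x]$, i.e. $[q^{-1}(H)]\in X^{\vee}$. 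Since $[q^{-1}(H)]\in \PP W^{\perp}$ automatically, passing to Zariski closures gives $\pi(X)^{\vee}\subseteq X^{\vee}\cap \PP W^{\perp}$.

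For the reverse inclusion under the hypothesis $\pi(X)\cong X$, I would use that such an isomorphism is in particular an immersion, so at a general smooth point the projective differential is an isomorphism onto $T_{\bar x}\pi(X)$; at the level of cones this upgrades the previous containment to the equality $q(\widehat T_{x}X)=\widehat T_{\bar x}\pi(X)$ (injectivity of $q$ on $\widehat T_{x}X$ follows from the immersion property, and the radial direction survives because $q(x)=\bar x\neq 0$). Take $[G]\in X^{\vee}\cap \PP W^{\perp}$. Membership in $\PP W^{\perp}$ means $W\subseteq \widehat G$, so $G=q^{-1}(H)$ for a unique hyperplane $H\subset V/W$; membership in $X^{\vee}$ means $\widehat T_{x}X\subseteq G=q^{-1}(H)$ for some $[x]\in X$. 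Applying $q$ and using its surjectivity, $\widehat T_{\bar x}\pi(X)=q(\widehat T_{x}X)\subseteq H$, so $[H]\in \pi(X)^{\vee}$, which under the identification is exactly $[G]$. Closing up yields the reverse containment and hence equality.

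The main obstacle is the bookkeeping around the indeterminacy locus of $\pi$ and around non-smooth points, since the clean tangent-space manipulations are valid only on the dense locus where $\pi$ is a morphism and $\bar x$ is a smooth point of $\pi(X)$; one must therefore phrase the argument using the conormal variety $\mathcal{P}$ from the Remark, establishing tangency at smooth points and then taking Zariski closures, and check that the images under $p_{2}$ of the two conormal loci correspond under $\PP(V/W)^{*}\cong\PP W^{\perp}$. The hypothesis $X\not\subset \PP W$ guarantees that $\pi|_{X}$ is dominant onto $\pi(X)$ with dense domain, so these generic arguments suffice after passing to closures. The genuinely delicate point is that the first inclusion needs only $q(\widehat T_{x}X)\subseteq \widehat T_{\bar x}\pi(X)$, which always holds, whereas the reverse needs the opposite containment, which is exactly where the isomorphism hypothesis becomes indispensable.
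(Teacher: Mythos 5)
The paper itself offers no proof of this proposition---it is quoted from Landsberg's book (Proposition 8.2.6.1) with pointers to \cite{GKZ} and Holme---so your proposal has to stand on its own merits. The first half does stand: the identification $\PP (V/W)^{*}\cong \PP W^{\perp}$ via $H\mapsto q^{-1}(H)$, the containment $q(\widehat T_{x}X)\subseteq \widehat T_{\bar x}\pi(X)$ on the dense locus of smooth points of $X$ where $\pi$ is defined and maps to smooth points of $\pi(X)$, and the passage to closures are all correct. Note why closure causes no trouble here: you map a \emph{dense} subset of $\pi(X)^{\vee}$ (hyperplanes tangent over that nice locus) into the \emph{closed} set $X^{\vee}\cap \PP W^{\perp}$, and then close up on the left.

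The equality direction, however, has a genuine gap. You write that membership of $[G]$ in $X^{\vee}$ ``means $\widehat T_{x}X\subseteq G$ for some $[x]\in X$.'' When $X$ is singular this is false: $X^{\vee}$ is by definition the \emph{closure} of the set of hyperplanes tangent at smooth points, and a point of $X^{\vee}\cap \PP W^{\perp}$ may only be a limit of tangent hyperplanes $G_{t}$, where the approximating $G_{t}$ need not contain $W$ and so cannot be pushed down to $\PP(V/W)$ at all. Your final remark about ``closing up'' does not repair this, because closure does not commute with intersection: $\overline{A}\cap \PP W^{\perp}$ can be strictly larger than $\overline{A\cap \PP W^{\perp}}$, and your argument only reaches the latter---whole components of $X^{\vee}\cap \PP W^{\perp}$ could a priori sit over the singular locus of $X$, untouched by any honest tangency. (This is exactly the asymmetry between the two inclusions: for the forward one the closure argument is harmless, for the reverse one it is the entire point.) The statement does close up immediately when $X$ is smooth, since then the set of tangent hyperplanes is already closed (it is the image of the projectivized conormal bundle under the proper map $p_{2}$), so every point of $X^{\vee}\cap\PP W^{\perp}$ is honestly tangent and your computation applies. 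For singular $X$ the isomorphism hypothesis must be used at the level of the conormal variety: a closed immersion injects Zariski tangent spaces, so $\widehat T_{x}X\cap W=0$ at \emph{every} point of $X$; a limit $T_{0}$ of tangent spaces $\widehat T_{x_{t}}X$ at smooth points is contained in the Zariski tangent space at the limit point, hence also meets $W$ trivially; therefore $q$ commutes with these limits, $q(T_{0})=\lim_{t} q(\widehat T_{x_{t}}X)$ is a limit of tangent spaces of $\pi(X)$, and every conormal pair of $X$ lying over $\PP W^{\perp}$---including those over singular points---descends to a conormal pair of $\pi(X)$. Only with this (or an equivalent) argument is $X^{\vee}\cap \PP W^{\perp}\subseteq \pi(X)^{\vee}$ established in the stated generality. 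For what it is worth, the paper only ever invokes the displayed inclusion, and applies it to the smooth Segre--Veronese varieties, so this gap would not propagate into the paper's results.
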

When $W = W^{\lambda} = (S^{d}V^{*})^{\perp} \subset S^{\lambda_{1}}V \otimes\cdots \otimes S^{\lambda_{s}}V$, the map $\pi$ is symmetrization and we will denote it by $Sym$.

\begin{prop}\label{prop:intersect}
Suppose $U$ is a subspace of $V$.  Let $X\subset \PP V$ be a variety and let $Y =  X \cap \PP U$.  Then $Y^{\vee} \subset X^{\vee}\cap \PP (V/U)^{\perp}$.
\end{prop}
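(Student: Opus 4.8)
The plan is to treat Proposition~\ref{prop:intersect} as the ``linear section'' companion of Proposition~\ref{prop:project}: intersecting a variety with a linear subspace $\PP U$ is, under the dual-variety operation, dual to projecting away from $\PP U^{\perp}$. Concretely, since $Y=X\cap\PP U\subseteq\PP U$, its dual $Y^{\vee}$ lives in $\PP U^{*}$, which I identify with $\PP(V^{*}/U^{\perp})$; the right-hand side of the statement I read as the image $\overline{\pi(X^{\vee})}$, where $\pi\colon\PP V^{*}\dashrightarrow\PP U^{*}$ is the projection with centre $\PP U^{\perp}$ (this is the ``restriction of linear forms to $U$'' map $\mathrm{res}_{U}$). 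There are two routes to the inclusion, and I would present the second as the actual proof.

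The quick route is to apply Proposition~\ref{prop:project} to the variety $X^{\vee}\subseteq\PP V^{*}$ and the subspace $W=U^{\perp}$. That proposition yields $\overline{\pi(X^{\vee})}^{\vee}\subseteq (X^{\vee})^{\vee}\cap\PP(U^{\perp})^{\perp}$, and invoking biduality $(X^{\vee})^{\vee}=X$ (valid over $\CC$ for the reflexive varieties at hand) together with the double-annihilator identity $(U^{\perp})^{\perp}=U$ rewrites the right-hand side as $X\cap\PP U=Y$. Thus $\overline{\pi(X^{\vee})}^{\vee}\subseteq Y$. When $\pi$ restricts to a generically injective map on $X^{\vee}$, Proposition~\ref{prop:project} in fact gives equality here, and applying $(\cdot)^{\vee}$ once more (an involution on reflexive varieties) produces $Y^{\vee}=\overline{\pi(X^{\vee})}$ on the nose.

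For the inclusion in full generality I would instead argue directly with conormal varieties, which sidesteps the difficulty in the previous paragraph. First I would restrict to the dense open set of $y\in Y$ that are smooth on both $Y$ and $X$ and satisfy $\widehat T_{y}Y=\widehat T_{y}X\cap U$. The linear-algebra identity $(\widehat T_{y}X\cap U)^{\perp}_{U^{*}}=\mathrm{res}_{U}\big((\widehat T_{y}X)^{\perp}_{V^{*}}\big)$ (extend a functional on $U$ vanishing on $\widehat T_{y}X\cap U$ to one on $V$ vanishing on $\widehat T_{y}X$) shows that the conormal fibre of $Y$ over $y$ is exactly the $\pi$-image of the conormal fibre of $X$ over $y$. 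Projecting to $\PP U^{*}$ and taking closures then gives $Y^{\vee}=\overline{p_{2}(\mathcal P_{Y})}\subseteq\overline{\pi(X^{\vee})}$, as desired.

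The main obstacle is precisely that the dual-variety operation is not inclusion-reversing: from $\overline{\pi(X^{\vee})}^{\vee}\subseteq Y$ one may not simply dualize to deduce $Y^{\vee}\subseteq\overline{\pi(X^{\vee})}$ (a single point sitting on a smooth conic already exhibits $A\subseteq B$ with $B^{\vee}\not\subseteq A^{\vee}$). This is why the clean conormal computation, rather than a formal dualization of Proposition~\ref{prop:project}, is the right engine. The only remaining technical care is the genericity of the tangency relation $\widehat T_{y}Y=\widehat T_{y}X\cap U$ and the treatment of components of $Y$ lying in $\operatorname{Sing}(X)$, both of which are absorbed by defining $Y^{\vee}$ as the closure of the image of the conormal variety over the smooth locus.
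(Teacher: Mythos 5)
Your conormal computation is sound for the statement it proves, but that statement is not Proposition~\ref{prop:intersect}. You read the right-hand side as the closure $\overline{\pi(X^{\vee})}$ of the image of $X^{\vee}$ under the projection $\pi\colon \PP V^{*}\dashrightarrow \PP U^{*}$ with centre $\PP U^{\perp}$. But the right-hand side of the proposition is a \emph{linear section}, not a projection image: the paper fixes a splitting $V=U\oplus W$ with $W\cong V/U$, identifies $U^{*}$ with $(V/U)^{\perp}=W^{\perp}\subset V^{*}$ (extend a form on $U$ by zero on $W$), and asserts that $Y^{\vee}$, so embedded in $\PP V^{*}$, lies inside $X^{\vee}$. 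Under this identification one has $X^{\vee}\cap\PP (V/U)^{\perp}\subseteq\overline{\pi(X^{\vee})}$, and the containment is in general strict (the section typically has dimension $\dim X^{\vee}-\dim W$, the image has dimension $\dim X^{\vee}$). So what you prove --- every hyperplane $H^{U}$ tangent to $Y$ is the restriction to $U$ of \emph{some} hyperplane tangent to $X$ --- is strictly weaker than what the proposition asserts, namely that the \emph{specific} extension $H=H^{U}+W$ is itself tangent to $X$; your lift comes with no control over its values on $W$, so you cannot conclude $[H]\in X^{\vee}$. The difference matters downstream: in Proof 1 of Lemma~\ref{lem:part 1} the proposition is used to chain inclusions among subvarieties of one fixed ambient space via the invariant splittings $S^{d}V\subset S^{\lambda_{1}}V\otimes\cdots\otimes S^{\lambda_{s}}V\subset S^{\mu_{1}}V\otimes\cdots\otimes S^{\mu_{t}}V$, and after replacing $\Seg_{\mu}(\PP V^{\times t})^{\vee}\cap\PP U^{*}$ by $\overline{\pi(\Seg_{\mu}(\PP V^{\times t})^{\vee})}$ the subsequent step --- intersecting with $\PP (S^{d}V)^{*}$ and landing inside $\Seg_{\mu}(\PP V^{\times t})^{\vee}\cap\PP (S^{d}V)^{*}$ --- no longer follows.

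The paper's own proof is correspondingly different: it extends $H^{U}$ by $W$ and then uses the assumption that the direct sum $V=U\oplus W$ induces a compatible decomposition of $X$ and of its tangent spaces, $T_{y+w}X=T_{y}Y+T_{w}(X\cap\PP W)$, to exhibit a (possibly different) point $y+w$ at which $H$ is tangent to $X$. That compatibility is exactly what your argument never invokes, and it is not dispensable: take $X$ a smooth conic in $\PP^{2}$, $\PP U$ a secant line meeting $X$ at $y_{1},y_{2}$, and $W$ a generic complement; then the extension-by-zero of the tangent hyperplane of $Y=\{y_{1},y_{2}\}$ at $y_{i}$ is the line through $y_{i}$ and $[W]$, which is not tangent to $X$, even though your restriction statement holds trivially there. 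So the two readings are genuinely inequivalent, and your proof, correct as it is for the projection statement, leaves the proposition as stated and as used unproved. Your observation that the ``quick route'' (dualizing Proposition~\ref{prop:project} and invoking biduality) fails because duality does not reverse inclusions is, on the other hand, well taken.
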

We identify $\PP (V/U)^{\perp}$ with $U^{*}$ and by abuse of notation write $Y^{\vee} \subset X^{\vee}\cap \PP U^{*}$.
\begin{proof}
This statement is almost a tautology, but we give a proof anyway.
We prove this for smooth points of $Y$, the rest follows by standard arguments taking Zariski closure.

Let $H^{U}$ be a hyperplane in $U$ tangent to $Y$ at some point $y$.  Then $H^{U}$ is naturally associated to a point $p \in U^{*}$, and we identify $U^{*}\cong (V/U)^{\perp}$.  This identification allows us to conclude that $p$ (now considered as a point in $V^{*}$) is associated to a hyperplane $H$ in $V$ that contains $V/U$. We will write this as $H = H^{U}+V/U$.  To conclude we must show that $H$ is tangent to $X$ at some point.

The direct sum $V = U \oplus W$, where $W = V/U$ induces a decomposition of $X$ and of the tangent space of $X$.  In particular, we may assume at a general point $p\in X$ that there is $w\in X\cap \PP W$, so that $p=y+w \in X$ and the decomposition is
\[
T_{y+w}X = T_{y}Y + T_{w}(X\cap \PP W) 
.\]
But from this description we see that since $H^{U}\supset T_{y}Y$, and by definition  $W \supset T_{w}(X\cap \PP W)$, so $H$ is tangent to $X$ at $y+w$.
\end{proof}

The following statement, which follows directly from the definition, relates the symmetrization of the $\mu$-discriminant to the geometric setting. (This statement is essentially \cite[Cor. 4.5]{GKZ}.)

\begin{prop}[Proposition/Definition]\label{prop:int} Let $\mu\vdash d$.
The symmetrization of the $\mu$-discriminant is the $\mu$-discriminant of a  symmetric tensor\[
\V(Sym(\Delta_{ \mu, n})) = \Seg_{\mu} \left(\PP V^{ \times t}\right)^{\vee} \cap \PP \left(S^{d}V^{*} \right)
,\]
where we are using the isomorphism $S^{d}V^{*}\cong \left( (S^{\mu_{1}}V\otimes \cdots \otimes S^{\mu_{t}}V)/S^{d}V \right)^{\perp}$.
\end{prop}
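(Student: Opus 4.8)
The plan is to show that the statement is essentially a tautology, once one recognizes that symmetrizing the defining polynomial is nothing but restricting that polynomial to the symmetric subspace, and that taking the zero locus of a restricted function commutes with intersecting the original zero locus with that subspace. So the whole proof reduces to one duality bookkeeping step plus an elementary observation about zero loci.

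First I would fix notation. Write $T = S^{\mu_{1}}V \otimes \cdots \otimes S^{\mu_{t}}V$, so that $\Seg_{\mu}(\PP V^{\times t}) \subset \PP T$ and its dual hypersurface sits in the dual space, $\Seg_{\mu}(\PP V^{\times t})^{\vee} = \V(\Delta_{\mu,n}) \subset \PP T^{*}$; thus $\Delta_{\mu,n}$ is a homogeneous function on $T^{*}$. Using the $\GL(V)$-invariant decomposition $T = S^{d}V \oplus W^{\mu}$ from Section~\ref{sec:geometry}, the stated isomorphism $S^{d}V^{*} \cong (T/S^{d}V)^{\perp}$ identifies $\PP(S^{d}V^{*})$ with the annihilator $\PP((W^{\mu})^{\perp}) \subset \PP T^{*}$. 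Since $T^{*} = S^{d}V^{*} \oplus (W^{\mu})^{*}$, this annihilator is exactly the image of the Pieri comultiplication $S^{d}V^{*} \hookrightarrow T^{*}$, which pins down precisely which copy of $S^{d}V^{*}$ inside $T^{*}$ we are intersecting with.

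The one genuine point to verify is that for a function $g$ on $T^{*}$ one has $Sym(g) = g|_{S^{d}V^{*}}$, the restriction to this subspace. I would argue this by duality: restriction of degree-$e$ functions along the inclusion $(W^{\mu})^{\perp} \hookrightarrow T^{*}$ is the map on $S^{e}(-)$ induced by the dual linear map $T \to T/W^{\mu} \cong S^{d}V$, and this dual map is exactly the multiplication (geometric symmetrization) $\pi_{W^{\mu}}$. Because $W^{\mu} = (S^{d}V^{*})^{\perp}$, this $\pi_{W^{\mu}}$ is the symmetrization map, so the induced map on polynomials is by definition $Sym$; hence $Sym(\Delta_{\mu,n}) = \Delta_{\mu,n}|_{S^{d}V^{*}}$. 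The content here is only the standard fact that multiplication $T \to S^{d}V$ is dual to the comultiplication $S^{d}V^{*} \hookrightarrow T^{*}$, together with keeping track of which summand of $T^{*}$ is being selected — this is the step that requires care, though it is routine rather than difficult.

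Finally I would conclude by the elementary observation that the zero locus of a restricted function is the intersection of the original zero locus with the subspace: a point $[F] \in \PP(S^{d}V^{*})$ satisfies $\Delta_{\mu,n}(F) = 0$ if and only if $(\Delta_{\mu,n}|_{S^{d}V^{*}})(F) = 0$. Therefore
\[
\V(Sym(\Delta_{\mu,n})) = \V(\Delta_{\mu,n}|_{S^{d}V^{*}}) = \V(\Delta_{\mu,n}) \cap \PP(S^{d}V^{*}) = \Seg_{\mu}(\PP V^{\times t})^{\vee} \cap \PP(S^{d}V^{*}),
\]
which is the claim. I expect no real obstacle, as the proposition is definitional; the only subtleties are the duality identification above and the caveat that this equality of zero loci degenerates (becoming all of $\PP(S^{d}V^{*})$) exactly when $Sym(\Delta_{\mu,n})$ vanishes identically. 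Determining the actual irreducible decomposition of this intersection is the substance of Theorem~\ref{thm:main theorem}, not of this proposition.
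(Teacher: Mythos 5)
Your proof is correct and takes essentially the paper's (implicit) approach: the paper gives no argument at all, asserting the proposition ``follows directly from the definition'' (it is credited as essentially Cor.~4.5 of the GKZ book), and your write-up is precisely that definitional unwinding --- the identification of $Sym$ with restriction of functions to the subspace $S^{d}V^{*}\cong (W^{\mu})^{\perp}\subset T^{*}$, dual to the projection $\pi_{W^{\mu}}\colon T\to S^{d}V$, followed by the observation that the zero locus of a restricted polynomial is the intersection of the original zero locus with the subspace. The only point worth noting is that this is a set-theoretic (indeed scheme-theoretic only up to the degenerate case you flag) identity; the multiplicity and irreducible-decomposition content lives in Theorem~\ref{thm:main theorem}, exactly as you say.
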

In particular, the symmetrization of the hyperdeterminant (of format $n^{\times d}$) is the hyperdeterminant of a symmetric multi-linear form;
 \[
 \V(Sym(HD_{d,n})) = \Seg\left(\PP V^{ \times d}\right)^{\vee}\cap \PP \left(S^{d}V^{*}\right)
.\]

\subsection{Plane cubics again}
As a prototypical example, we return to plane cubics. It was classically known that decomposable plane cubics and Fermat cubics are related by projective duality.
\begin{prop}\label{prop:easy}  Consider $\Chow_{1,1,1}\PP^{2}= \{[l_{1}l_{2}l_{3}] \in \PP S^{3} \CC^{3} \mid  0\neq l_{i}\in \CC^{3} \} \subset \PP S^{3}\CC^{3}$.
 $\Chow_{1,1,1}\left(\PP V\right)^{\vee}$ is the closure of the orbit of the Fermat cubic, \emph{i.e.} the $3^{rd}$ secant variety to the cubic Veronese:\[
\Chow_{1,1,1}\left(\PP^{2}\right)^{\vee} = \sigma_{3}( \nu_{3}\PP^{2}).
\]
\[= \overline{ \left\{ 
h\in \PP S^{3}(\CC^{3})^{*}\mid h = e_{1}^{3} + e_{2}^{3}+ e_{3}^{3}, e_{i}\in (\CC^{3})^{*}
\right\}} 
\subset \PP S^{3}(\CC^{3})^{*}.\]
In particular, $\Chow_{1,1,1}\left(\PP V\right)^{\vee}$ is a hypersurface.
\end{prop}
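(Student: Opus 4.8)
The plan is to compute the dual variety directly from its conormal description, exploiting the homogeneity of the Chow variety. The products $l_1 l_2 l_3$ of three linearly independent linear forms constitute a single dense $\GL(V)$-orbit in $\Chow_{1,1,1}(\PP V)$, since any three lines in general position in $\PP^2$ can be carried to the coordinate lines, and this orbit lies in the smooth locus. Because dual varieties are computed from smooth points and are $\GL(V)$-equivariant, it suffices to determine the tangent hyperplanes at the single representative point $[x_0 x_1 x_2] \in \PP S^3 V$ (for a basis $x_0,x_1,x_2$ of $V$) and then take the $\GL(V)$-orbit closure of the resulting conormal directions.

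First I would compute the affine tangent space. The Chow variety is the image of the map $([a_1],[a_2],[a_3]) \mapsto [a_1 a_2 a_3]$, so differentiating at $(x_0, x_1, x_2)$ gives
\[
\widehat{T}_{[x_0 x_1 x_2]}\Chow_{1,1,1}(\PP V) = x_1 x_2 V + x_0 x_2 V + x_0 x_1 V \subset S^3 V,
\]
which is spanned by $x_0 x_1 x_2$ together with the six monomials $x_i^2 x_j$ with $i \neq j$. This space is $7$-dimensional, matching $\dim \Chow_{1,1,1}(\PP V) = 6$ and confirming that $[x_0 x_1 x_2]$ is a smooth point.

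Next I would identify the conormal fiber inside $\PP (S^3 V)^* \cong \PP S^3 V^*$. Using the apolarity pairing between $S^3 V$ and $S^3 V^*$, for which distinct monomials are orthogonal, the annihilator of $\widehat{T}_{[x_0 x_1 x_2]}$ is precisely the span of the three pure cubes $X_0^3, X_1^3, X_2^3$ (with $X_i \in V^*$ the dual basis), these being dual to the three monomials $x_i^3$ absent from the tangent space. Hence the hyperplanes tangent to $\Chow_{1,1,1}(\PP V)$ at $[x_0 x_1 x_2]$ are exactly the cubics $[c_0 X_0^3 + c_1 X_1^3 + c_2 X_2^3]$, whose general member is a Fermat cubic. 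Taking the $\GL(V)$-orbit closure of this $\PP^2$ of conormal directions yields the inclusion $\sigma_3(\nu_3 \PP^2) \subseteq \Chow_{1,1,1}(\PP V)^{\vee}$.

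Finally I would conclude by a dimension count. A parameter count (three points of $\PP^2$ together with three coefficients, then projectivized), or equivalently Terracini's lemma applied at a general point $X_0^3 + X_1^3 + X_2^3$, shows $\dim \sigma_3(\nu_3 \PP^2) = 8$, so it is a hypersurface in $\PP^9 = \PP S^3 V^*$. On the other hand every dual variety satisfies $\dim \Chow_{1,1,1}(\PP V)^{\vee} \leq 8$. Both varieties are irreducible, so the inclusion above forces equality, and in particular $\Chow_{1,1,1}(\PP V)^{\vee}$ is a hypersurface. The step requiring the most care is the apolarity bookkeeping that pins down the annihilator as the span of the pure cubes; the one genuine input from classical geometry is that $\sigma_3(\nu_3 \PP^2)$ is a true hypersurface (equivalently, that the Aronhold invariant does not vanish identically), which is what secures the final dimension comparison.
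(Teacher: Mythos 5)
Your proof is correct and takes essentially the approach the paper itself indicates: the paper's proof, left to the reader, is exactly the computation of the conditions a hyperplane in $S^3V^*$ must satisfy to annihilate tangent vectors at a general point $[l_1l_2l_3]$ of the Chow variety, which is your conormal computation at the representative point $[x_0x_1x_2]$. Your apolarity identification of the conormal fiber with $\PP\langle X_0^3,X_1^3,X_2^3\rangle$, followed by the $\GL(V)$-orbit-closure and dimension/irreducibility comparison with $\sigma_3(\nu_3\PP^2)$, correctly supplies the details the paper omits.
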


The proof of~\ref{prop:easy} is a straightforward calculation considering the conditions imposed on a hyperplane in $S^{3}V^{*}$ that annihilates a tangent vector through a curve on the Chow variety of the form $l_{1}(t)l_{2}(t)l_{3}(t)$, where for each $t$ $l_{i}(t)$ is a linear form. We leave the details for the reader.

Lemma~\ref{lem:part 1} below implies that since $\Chow_{1,1,1}\left(\PP V\right)^{\vee}$ is a hypersurface, its equation must divide the symmetrization of the hyperdeterminant of format $3 \times 3 \times 3$.
This geometric statement, however, ignores multiplicity.
Because of our generality assumption, there are six different tensors -- $l_{1}\otimes l_{2}\otimes l_{3}$ and its permutations -- that symmetrize to $l_{1}l_{2}l_{3}$.
This fact implies that there are $6$ copies of the equation of $\Chow_{1,1,1}\left(\PP V\right)^{\vee}$ in the symmetrized hyperdeterminant.

Proposition~\ref{prop:easy} is characteristic of the theme of the rest of the article.  The splitting of the hyperdeterminant of a polynomial will depend on the dimensions and multiplicities of the dual varieties of Chow varieties. We also will show that it this is sufficient.

\begin{remark}
One may attempt to do something similar to Proposition~\ref{prop:easy} more generally for $\Chow_{\lambda}(\PP V)$ for any $\lambda \vdash d$, and $\#\lambda = s\leq n$.
One finds that (as long as $s\leq n$),\[
\Chow_{\lambda}(\PP V)^{\vee} \supset \sigma_{d}(\nu_{d}(\PP V)),
\]
where $\sigma_{d}(\nu_{d}(\PP V))$ is the variety of points on secant $d-1$-planes to the Veronese variety $\nu_{d}(\PP V)$.
Equality does not hold in general. One may use the dual of Chow varieties as a source for equations for secant varieties of Veronese varieties.  The utility of this fact is limited by the degree of the equations obtained.
\end{remark}

\subsection{Dimension of the duals to Chow varieties}

Herein we prove Theorem~\ref{thm:hypersurface} about the dimension of the duals of Chow varieties. For the reader's convenience, we repeat
that we need to show that for $\lambda = (1^{m_{1}},  \dots ,  p^{m_{p}})$, and $n=\dim(V)$,  $\Chow_{\lambda}\left(\PP V\right)^{\vee}$  a hypersurface with the only exceptions
\begin{itemize}
\item $n = 2$ and $m_{1}\neq 0$ 
\item $n > 2$, $s=2$ and $m_{1}=1$ (so $\lambda =(d-1,1)$).
\end{itemize}
The case $d=2$ is already well understood, so we will assume $d>2$.

\begin{proof}[Proof of Theorem~\ref{thm:hypersurface}]
The dimension of a dual variety can be calculated via the Katz dimension formula, essentially calculating the Hessian at a general point, but we prefer to work geometrically.

A dual variety $X^{\vee}$ is a hypersurface unless a general tangent hyperplane is tangent to $X$ in a positive dimensional space. This is the condition that we will apply in both cases.  Our proof follows a standard proof about the non-degeneracy for the dual of Segre-Veronese varieties.

For any $n$, the Chow variety $\Chow_{\lambda}(\PP V)$ does not contain any linear spaces if $m_{1}=0$, so in this case the dual is a hypersurface. 

Now suppose $m_{1}>0$.  We must then show that a generic hyperplane is tangent to $\Chow_{\lambda}(\PP V)$ along (at least) a line precisely when $n=2$ or when $n>2$ and $\lambda = (d-1,1)$. 

Consider a general point $[x] \in\Chow_{\lambda}(\PP V)$ with $\lambda = (1^{m_{1}}, \ldots ,  p^{m_{p}})$ and $m_{1}>0$.  Then we may write  $x = lf$, where $f$ is completely decomposable and $l$ is a linear form.
Then the tangent space is\[
\begin{matrix}
T_{x}\Chow_{\lambda}(\PP V) &=& \{lf,wf, lf'z\mid w,z \in V \} \\
&=& V\cdot f+\sum_{i} V\cdot l\cdot \frac{f}{y_{i}}  
,\end{matrix}
\]
where $y_{i}$ are the factors of $f$.

The linear space $\PP L = \PP (V\cdot f)$ is contained in $\Chow_{\lambda}(\PP V)$, and up to reordering of the factors of $x$, every linear space on $\Chow_{\lambda}(\PP V)$ is of this form.

Suppose $H$ is a general hyperplane that contains a general tangent space $T_{x}:=T_{x}\Chow_{\lambda}(\PP V)$ as above. If $H$ is to be tangent along a line on $\Chow_{\lambda}(\PP V)$, then there must be another point of the form $[l'f] \in V\cdot f$ that is distinct from $[x]=[lf]$.  So we must choose 
 $l' \in l^{\perp}\subset V$ and $l^{\perp}$ is an $n-1$ dimensional vector space.
Next we consider two cases, first $\#\lambda = s=2$ and later $s>2$.

If $s=2$, consider $x = l_{1}\cdot f$, with $f=l_{2}^{d-1}$, and generically we may assume $l_{1}$ and $l_{2}$ are linearly independent.
Let $y=l_{3}l_{2}^{d-1}$ be a general point in $Vl_{2}^{d-1}$, where $l_{3}$ is assumed to be independent of $l_{1}$ so that $x$ and $y$ are independent. 

Since $H$ annihilates $T_{x}$, we should calculate $T_{y}$ modulo $T_{x}$.  The vectors that remain are all of the form  $l_{3}l_{2}'l_{2}^{d-2}$.  If $l_{2}'$ is in the line $[l_{2}]$ then $l_{3}l_{2}'l_{2}^{d-2}$ is contained in $T_{x}$.  Additionally, if $l_{2}'$ is in the line $[l_{1}]$, then $l_{2}l_{3}'l_{3}^{d-2}$ is not on $\Chow(1,d-1)\PP V$.  So we generically have a non-trivial condition $H(l_{2}l_{3}'l_{3}^{d-2})=0$ for each $l_{2}' \in \{l_{1},l_{2} \}^{\perp}$, which is at most $n-2$ conditions.  Therefore the dimension of the space of possible points $l'f$ is at least $n-1 -(n-2) = 1$, thus a generic hyperplane is tangent along a line. 

Now suppose $s>2$.  We will consider first the case $s=3$ and later argue that considering this case suffices.

Let $x = l_{1}l_{2}^{i}l_{3}^{j}$, where $i+j+1=d$ and $i,j>0$, else we revert to the previous case.  Consider $y = l_{4}l_{2}^{i}l_{3}^{j} \in Vl_{2}^{i}l_{3}^{j}$ and compute $T_{y}$ modulo $T_{x}$.  Points on $T_{y}$ have the form\[
l_{4}'l_{2}^{i}l_{3}^{j} + i\cdot l_{4}l_{2}'l_{2}^{i-1}l_{3}^{j} +j\cdot l_{4}l_{2}^{i}l_{3}l_{3}^{j-1}
,\]
which reduces to \[
i\cdot l_{4}l_{2}'l_{2}^{i-1}l_{3}^{j} +j\cdot l_{4}l_{2}^{i}l_{3}l_{3}^{j-1}
,\]
modulo $T_{x}$.
As before, the maximum number of independent conditions we can impose on the choices of $y\in Vl_{2}^{i}l_{3}^{j}$ will come from the cases when\[
l_{2}'\in \{l_{1},l_{2}\}^{\perp} \text{ and } l_{3}' \in \{l_{1},l_{3}\}^{\perp}
,\]
which are $n-2 +n-2=2n-4$ conditions,
and for generic $H$ this bound will be achieved.  When $n=2$ no additional conditions are imposed and $\Chow_{i,j,1}\PP^{1}$ is not a hypersurface. 
On the other hand, $2n-4$ independent conditions imposed on a space of dimension $n-1$ will not have positive dimension as soon as $n\geq 3$, and thus $\Chow_{i,j,1} \PP V$ is a hypersurface whenever $\dim V\geq 3$.

Finally, when $s>3$ the analogous calculation provides at least as many conditions to impose on the $n-1$ choices of possible additional points in $Vf$ where a generic hyperplane may be tangent to $\Chow_{\lambda}(\PP V)$, so the dimension of the resulting space will not be positive for $\dim V\geq 3$.
\end{proof}

\section{Proof of main results}\label{sec:main result}
We will prove Theorem~\ref{thm:main theorem} in two steps. The first step is the following.

\begin{lemma}\label{lem:part 1}
Suppose $\lambda\vdash d$ with $\#\lambda =s$. Then for every $\mu\vdash d$ with $\# \mu = t$  such that $\lambda \prec \mu$ (by refinement)
\begin{equation} \label{eq:chowseg}
\Chow_{\lambda}\left(\PP V\right)^{\vee}\subset \Seg_{\mu}\left(\PP V^{ \times t}\right)^{\vee}\cap \PP \left(S^{d}V^{*}\right)
.\end{equation}
Moreover when $\Chow_{\lambda}\left(\PP V\right)^{\vee}$ is a hypersurface  it occurs with multiplicity $M_{\lambda,\mu}$ in $\Seg_{\mu}\left(\PP V^{ \times t}\right)^{\vee}\cap \PP \left(S^{d}V^{*}\right)
$, where $M_{\lambda,\mu}$
 is the number of partitions $\mu$ that refine $\lambda$.
\end{lemma}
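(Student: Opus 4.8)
The plan is to prove the inclusion~\eqref{eq:chowseg} by composing the two functorial relations between dual varieties and linear projections/sections (Propositions~\ref{prop:project} and~\ref{prop:intersect}), and then to extract the multiplicity by counting the tangency points of a general symmetric hyperplane on $\Seg_{\mu}(\PP V^{\times t})$ with the help of the symmetric-evaluation principle of Section~\ref{sec:multilinear}.

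For the inclusion, I would first apply Proposition~\ref{prop:project} to the symmetrization projection $Sym=\pi_{W^{\lambda}}$, whose restriction to $\Seg_{\lambda}(\PP V^{\times s})$ has image $\Chow_{\lambda}(\PP V)$ by definition. Since $W^{\lambda}=(S^{d}V^{*})^{\perp}$, so that $(W^{\lambda})^{\perp}=S^{d}V^{*}$, this gives
\[
\Chow_{\lambda}(\PP V)^{\vee}\subseteq \Seg_{\lambda}(\PP V^{\times s})^{\vee}\cap \PP(S^{d}V^{*}).
\]
Because $\lambda\prec\mu$, the remark that $\Seg_{\mu}(\PP V^{\times t})\cap \PP(S^{\lambda_{1}}V\otimes\cdots\otimes S^{\lambda_{s}}V)=\Seg_{\lambda}(\PP V^{\times s})$ (after reordering) lets me invoke Proposition~\ref{prop:intersect} with $U=S^{\lambda_{1}}V\otimes\cdots\otimes S^{\lambda_{s}}V$, yielding
\[
\Seg_{\lambda}(\PP V^{\times s})^{\vee}\subseteq \Seg_{\mu}(\PP V^{\times t})^{\vee}\cap \PP U^{*}.
\]
Chaining the two and using $S^{d}V\subset U$, equivalently $\PP(S^{d}V^{*})\subseteq \PP U^{*}$, collapses the two linear conditions into one and produces~\eqref{eq:chowseg}.

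For the multiplicity I would work locally near a general point $[H_{0}]\in\Chow_{\lambda}(\PP V)^{\vee}$, a symmetric hyperplane tangent to $\Chow_{\lambda}(\PP V)$ at a general point $P=[v_{1}^{\lambda_{1}}\cdots v_{s}^{\lambda_{s}}]$. By the evaluation principle of Section~\ref{sec:multilinear}, $H_{0}\in S^{d}V^{*}$ takes the same value on every tensor of $\PP(S^{\mu_{1}}V\otimes\cdots\otimes S^{\mu_{t}}V)$ that symmetrizes to $P$. Such tensors are exactly the monomials $u_{1}^{\mu_{1}}\otimes\cdots\otimes u_{t}^{\mu_{t}}$ with $u_{i}\in\{v_{1},\dots,v_{s}\}$ and the parts of $\mu$ landing on each $v_{j}$ summing to $\lambda_{j}$; these are precisely the refinement expressions~\eqref{eq:refine}, so there are $M_{\lambda,\mu}$ of them, say $z_{1},\dots,z_{M_{\lambda,\mu}}$, all distinct in $\Seg_{\mu}(\PP V^{\times t})$. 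Tracing the conormal correspondence through the inclusions above shows $H_{0}$ is tangent to $\Seg_{\mu}(\PP V^{\times t})$ at each $z_{k}$, so $[H_{0}]$ is a crossing where $M_{\lambda,\mu}$ local branches of the irreducible hypersurface $\Seg_{\mu}(\PP V^{\times t})^{\vee}$ meet. Writing $\Delta_{\mu,n}=\prod_{k=1}^{M_{\lambda,\mu}} g_{k}$ as a product of local analytic equations of these branches, we get $Sym(\Delta_{\mu,n})=\prod_{k}\bigl(g_{k}|_{S^{d}V^{*}}\bigr)$. By biduality the tangent hyperplane to the $k$-th branch at $[H_{0}]$ is $z_{k}^{\perp}$, and its restriction to $\PP(S^{d}V^{*})$ is a \emph{proper} hyperplane — equivalently the differential of $g_{k}|_{S^{d}V^{*}}$ at $[H_{0}]$ is nonzero — precisely because $z_{k}\notin W^{\lambda}$, i.e. $\pi_{W^{\lambda}}(z_{k})=v_{1}^{\lambda_{1}}\cdots v_{s}^{\lambda_{s}}\neq 0$. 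Hence each factor vanishes to order exactly $1$ along $\Chow_{\lambda}(\PP V)^{\vee}$ and the product vanishes to order $M_{\lambda,\mu}$.

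The main obstacle is the passage from the global irreducible hypersurface to this local factorization: I must show that over a general $[H_{0}]\in\Chow_{\lambda}(\PP V)^{\vee}$ there are \emph{exactly} $M_{\lambda,\mu}$ tangency points on $\Seg_{\mu}(\PP V^{\times t})$ and no others, and that each yields a single smooth reduced branch of the discriminant (a nondegenerate, Morse-type tangency) rather than a ramified one. This requires analyzing the fiber of the conormal projection $p_{2}$ over $\Chow_{\lambda}(\PP V)^{\vee}$ and verifying that the generic tangency along each symmetric copy is simple. By contrast, the transversality to $\PP(S^{d}V^{*})$ is the clean biduality computation above and is \emph{not} the difficulty.
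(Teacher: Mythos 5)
Your proposal follows the same two-track strategy as the paper, which in fact gives two proofs of this lemma. Your derivation of the inclusion \eqref{eq:chowseg} --- Proposition~\ref{prop:project} applied to the symmetrization $\pi_{W^{\lambda}}$, then Proposition~\ref{prop:intersect} with $U=S^{\lambda_{1}}V\otimes\cdots\otimes S^{\lambda_{s}}V$, using that $\Seg_{\lambda}(\PP V^{\times s})$ is the section of $\Seg_{\mu}(\PP V^{\times t})$ by $\PP U$ --- is precisely the paper's first proof. Your identification of the $M_{\lambda,\mu}$ contact points $z_{1},\dots,z_{M_{\lambda,\mu}}$ of a general symmetric tangent hyperplane $[H_{0}]$ on $\Seg_{\mu}(\PP V^{\times t})$, via the evaluation principle of Section~\ref{sec:multilinear} and the refinement expressions \eqref{eq:refine}, is the first half of the paper's second proof. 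All of this is correct, and it yields the lower bound: the multiplicity is at least $M_{\lambda,\mu}$.

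The gap is the one you flag yourself, and it is genuine: nothing in your proposal shows that a general $[H_{0}]\in\Chow_{\lambda}(\PP V)^{\vee}$ has \emph{no other} contact points on $\Seg_{\mu}(\PP V^{\times t})$, so the asserted equality of multiplicities is not proved. The paper closes exactly this hole, and does so by a short genericity argument rather than by the conormal-fiber, Morse-type analysis you anticipate. It considers a tensor $u_{1}^{\mu_{1}}\otimes\cdots\otimes u_{t}^{\mu_{t}}$ that does not symmetrize to the contact point $P=[v_{1}^{\lambda_{1}}\cdots v_{s}^{\lambda_{s}}]$ but whose symmetrization still lies in $\Chow_{\lambda}(\PP V)$, and splits into two cases: if $[u_{1}^{\mu_{1}}\cdots u_{t}^{\mu_{t}}]$ lies in $T_{P}\Chow_{\lambda}(\PP V)\subset [F]$, then $F$ cannot be tangent to $\Chow_{\lambda}(\PP V)$ there, since a second contact point would contradict the hypothesis that $\Chow_{\lambda}(\PP V)^{\vee}$ is a hypersurface (a general tangent hyperplane of a variety with hypersurface dual has a unique contact point, by biduality); if it does not lie in $T_{P}\Chow_{\lambda}(\PP V)$, then a general $F$ containing $T_{P}\Chow_{\lambda}(\PP V)$ does not even vanish at that point, hence is not tangent to $\Seg_{\mu}(\PP V^{\times t})$ at any tensor over it. Together with your count this pins the number of contact points, and hence the multiplicity, at exactly $M_{\lambda,\mu}$. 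To be fair, your local-branch factorization makes explicit something the paper leaves implicit (that multiplicity equals the number of contact points requires each contact point to contribute a reduced branch, which your $z_{k}\notin\ker(Sym)$ transversality observation addresses); but as a proof of the lemma, the missing upper bound is the essential step, and it is supplied by the paper's genericity argument, not by any refinement of the lower-bound analysis. One small slip: the transversality holds because $z_{k}\notin W^{\mu}$, the kernel of the symmetrization $S^{\mu_{1}}V\otimes\cdots\otimes S^{\mu_{t}}V\to S^{d}V$; the space you call $W^{\lambda}$ lives in the wrong tensor product.
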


A generating function for $M_{\lambda,\mu}$ is given in Proposition~\ref{mark}.
We will give two proofs of Lemma~\ref{lem:part 1}.  The first relies only on the two statements Propositions~\ref{prop:project} and~\ref{prop:intersect} and is more efficient, however it only proves a lower bound for the multiplicity.  The second is more computational and gives some ideas as to how the subsequent statements will be proved. The second proof has the advantage of providing an exact count for the multiplicity.

\begin{proof}[Proof 1]
Suppose as in the statement that $\lambda \prec \mu$.  Proposition~\ref{prop:project} implies
\[
\Chow_{\lambda}(\PP V )^{\vee} 
\subset
\Seg_{\lambda}(\PP V ^{\times t} )^{\vee} \cap \PP( S^{d}V)^{*}
.\]
Up to re-ordering of the tensor product we have
\[
\Seg_{\lambda} (\PP V^{\times s})= \Seg_{\mu}(\PP V ^{\times t} ) \cap \PP (S^{\lambda_{1}}\otimes\cdots\otimes S^{\lambda_{s}}V)
,\]
so 
\[
\Chow_{\lambda}(\PP V )^{\vee} 
\subset
\left(\Seg_{\mu}(\PP V ^{\times t} ) \cap \PP (S^{\lambda_{1}}\otimes\cdots\otimes S^{\lambda_{s}}V) \right)^{\vee} \cap \PP S^{d}V^{*} 
.\]
Proposition~\ref{prop:intersect} implies that the right hand side of the above expression satisfies
\begin{align*}
&\left(\Seg_{\mu}(\PP V ^{\times t}  \cap  \PP (S^{\lambda_{1}}\otimes\cdots\otimes S^{\lambda_{s}}V) \right)^{\vee}
\cap \PP( S^{d}V)^{*} \\
&\subset
\left(\Seg_{\mu}(\PP V ^{\times t} )^{\vee} \cap  \PP (S^{\lambda_{1}}\otimes\cdots\otimes S^{\lambda_{s}}V)^{*} \right)
\cap \PP( S^{d}V)^{*} \\
&=
\Seg_{\mu}(\PP V ^{\times t} )^{\vee}
\cap \PP( S^{d}V)^{*} 
,\end{align*}
and this yields the result \eqref{eq:chowseg}.

Finally, this occurs for every $\lambda$ and $\mu$ for which we have $\lambda\prec \mu$, so $M_{\lambda,\mu}$ is a lower bound for the multiplicity of $\Chow_{\lambda}\subset  \Seg_{\mu}(\PP V ^{\times t} )^{\vee} \cap \PP( S^{d}V)^{*} $.
\end{proof}
\begin{proof}[Proof 2]
Suppose $F$ is a symmetric hyperplane tangent to $\Chow_{\lambda}\left(\PP V^{ \times s}\right)$ at a general point $[v_{1}^{\lambda_{1}}\cdots v_{s}^{\lambda_{s}}]$.  Then we have 
\begin{equation}\label{eq:ltan}
F\left(w\frac{v_{1}^{\lambda_{1}}\cdots v_{s}^{\lambda_{s}}}{v_{i}}\right)=0
\end{equation}
for all $1\leq i \leq s$ and for all $w \in V$.  

Now we apply the ideas outlined in Section~\ref{sec:multilinear}.
Since $\lambda \prec \mu$,  we can consider the inclusion 
\begin{equation}\label{eq:partial}
S^{\lambda_{1}}V \otimes S^{\lambda_{2}}V \otimes \cdots \otimes S^{\lambda_{s}}V 
\subset
S^{\mu_{1}}V \otimes S^{\mu_{2}}V \otimes \cdots \otimes S^{\mu_{t}}V  
.\end{equation}

This implies that $v_{1}^{\lambda_{1}}\cdots v_{s}^{\lambda_{s}} \in S^{\mu_{1}}V\otimes\cdots\otimes S^{\mu_{t}} V$, and there exists a tensor $u_{1}^{\mu_{1}}\otimes u_{2}^{\mu_{2}}\otimes\cdots\otimes u_{t}^{\mu_{t}}$, where each vector $u_{i}$ is an element of $\{v_{1}, \dots , v_{s}\}$, and in particular $u_{1}^{\mu_{1}}u_{2}^{\mu_{2}}\cdots u_{t}^{\mu_{t}} = v_{1}^{\lambda_{1}}\cdots v_{s}^{\lambda_{s}} $.
In other words $u_{1}^{\mu_{1}}\otimes u_{2}^{\mu_{2}}\otimes\cdots\otimes u_{t}^{\mu_{t}}$ symmetrizes to $v_{1}^{\lambda_{1}}\cdots v_{s}^{\lambda_{s}} $, and thus $F\left(u_{1}^{\mu_{1}}\otimes u_{2}^{\mu_{2}}\otimes\cdots\otimes u_{t}^{\mu_{t}}\right)=0$ (see Section~\ref{sec:multilinear}). 

We claim that $F$ is tangent to $\Seg_{\mu}\left(\PP V^{ \times t}\right)$ at each $[u_{1}^{\mu_{1}}\otimes u_{2}^{\mu_{2}}\otimes\cdots\otimes u_{t}^{\mu_{t}}]$.
Indeed, any tangent vector through $u_{1}^{\mu_{1}}\otimes u_{2}^{\mu_{2}}\otimes\cdots\otimes u_{t}^{\mu_{t}}$ can be written as a linear combination of tensors of the form
 \[
 u_{1}^{\mu_{1}}\otimes\cdots\otimes u_{i-1}^{\mu_{i-1}}\otimes w\cdot u_{i}^{\mu_{i}-1}\otimes u_{i+1}^{\mu_{i+1}}\otimes\cdots\otimes  u_{t}^{\mu_{t}}
 \]
 for $1\leq i\leq t $ and $w\in V$.  This tensor symmetrizes to 
 \[w\cdot\frac{
 u_{1}^{\mu_{1}}\cdots  u_{t}^{\mu_{t}}}{u_{i}}
= w\cdot\frac{
 v_{1}^{\lambda_{1}}\cdots v_{s}^{\lambda_{s}} 
 }{v_{j}}\]
where the equality holds because 
$u_{1}^{\mu_{1}}\otimes u_{2}^{\mu_{2}}\otimes\cdots\otimes u_{t}^{\mu_{t}}$ 
symmetrizes to
 $ v_{1}^{\lambda_{1}}\cdots v_{s}^{\lambda_{s}} $
 and moreover $u_{i}=v_{j}$ for some $j$.  Since $F$ is symmetric and takes the same value at every tensor that symmetrize to the same form,
 \eqref{eq:ltan} implies that $F$ is tangent to  $\Seg_{\mu}\left(\PP V^{ \times t}\right)$ at each $[u_{1}^{\mu_{1}}\otimes u_{2}^{\mu_{2}}\otimes\cdots\otimes u_{t}^{\mu_{t}}]$.

The number of points $u_{1}^{\mu_{1}}\otimes u_{2}^{\mu_{2}}\otimes\cdots\otimes u_{t}^{\mu_{t}} \in S^{\mu_{1}}V\otimes\cdots\otimes S^{\mu_{t}} V$ that symmetrize to $v_{1}^{\lambda_{1}}\cdots v_{s}^{\lambda_{s}}$ is computed by $M_{\lambda,\mu}$ and is a lower bound for the multiplicity of $\Chow_{\lambda}$ in $\Seg_{\mu}\left(\PP V^{ \times t}\right) \cap \PP S^{d}V^{*}$.

On the other hand, suppose $\Chow_{\lambda}(\PP V)^{\vee}$ is a hypersurface and is contained in $\Seg_{\mu}\left(\PP V^{ \times t}\right)\cap \PP S^{d}V^{*}$. 

Suppose $u_{1}^{\mu_{1}}\otimes u_{2}^{\mu_{2}}\otimes\cdots\otimes u_{t}^{\mu_{t}} \in S^{\mu_{1}}V\otimes\cdots\otimes S^{\mu_{t}} V$ is a tensor that does not symmetrize to $v_{1}^{\lambda_{1}}\cdots v_{s}^{\lambda_{s}}$ but still $[u_{1}^{\mu_{1}}\cdots u_{t}^{\mu_{t}}] \in \Chow_{\lambda}\PP V$.   In particular 
$[u_{1}^{\mu_{1}}\cdots u_{t}^{\mu_{t}}]
\neq 
[v_{1}^{\lambda_{1}}\cdots v_{s}^{\lambda_{s}}]$.

If $[u_{1}^{\mu_{1}}\cdots u_{t}^{\mu_{t}}] \in T_{[v_{1}^{\lambda_{1}}\cdots v_{s}^{\lambda_{s}}]}\Chow_{\lambda}(\PP V) \subset [F]$, then $F$ is not tangent to $\Chow_{\lambda}(\PP V)$  at $[u_{1}^{\mu_{1}}\cdots u_{t}^{\mu_{t}}]$ else this would violate the condition that $\Chow_{\lambda}\PP V^{\times s}$ be a hypersurface.

If $[u_{1}^{\mu_{1}}\cdots u_{t}^{\mu_{t}}]$ is not in the tangent space and $\Chow(\PP V)$ is not the whole ambient space, a generic $F$ satisfying $s(n-1)$ independent conditions will miss a point, thus we can choose an $F$ that does not vanish at $[u_{1}^{\mu_{1}}\cdots u_{t}^{\mu_{t}}]$.

So $M_{\lambda,\mu}$ is also the maximum multiplicity of a hypersurface $\Chow_{\lambda}(\PP V)^{\vee}$ in $\Seg_{\mu}(\PP V^{ \times t})^{\vee}\cap \PP S^{d}V^{*}$.
\end{proof}

The second step of the proof of Theorem~\ref{thm:main theorem} is the following.
\begin{lemma}\label{lem:part 2}
Suppose $F\subset V^{\otimes d}$ is a symmetric hyperplane that is tangent to the Segre-Veronese variety $\Seg_{\mu}\left(\PP V^{ \times t}\right)$ at $[u_{1}^{\otimes \mu_{1}}\otimes\cdots\otimes u_{t}^{\otimes \mu_{t}}]$.  
Suppose $\lambda\prec \mu$. Then $[u_{1}^{\mu_{1}}\cdots u_{t}^{\mu_{t}}] \in \Chow_{\lambda}\left(\PP V\right)$ and $F$ is also tangent to $\Chow_{\lambda}(\PP V)$ at $[u_{1}^{\mu_{1}}\cdots u_{t}^{\mu_{t}}]$.
\end{lemma}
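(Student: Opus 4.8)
The plan is to argue directly with embedded tangent spaces and the symmetrization principle of Section~\ref{sec:multilinear}, treating all admissible $\lambda$ simultaneously rather than a single one. First I would record the coincidence structure of the tangency point. Writing $Q=u_{1}^{\mu_{1}}\cdots u_{t}^{\mu_{t}}\in S^{d}V$ and collecting the $u_{i}$ that are proportional, I may after rescaling take the distinct ones to be $v_{1},\dots,v_{r}$, pairwise independent, with $v_{k}$ occurring with total exponent $\lambda^{*}_{k}=\sum_{i:\,[u_{i}]=[v_{k}]}\mu_{i}$, so that $Q=\prod_{k}v_{k}^{\lambda^{*}_{k}}$. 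Since $\lambda^{*}$ is obtained from $\mu$ by merging blocks, $\lambda^{*}\prec\mu$. The membership clause $[Q]\in\Chow_{\lambda}(\PP V)$ then holds for exactly those $\lambda\prec\mu$ that refine $\lambda^{*}$: given such a $\lambda$, the refinement $\lambda^{*}\prec\lambda$ splits each part $\lambda^{*}_{k}$ into parts $\{\lambda_{j}:j\in S_{k}\}$ with $\{1,\dots,s\}=\bigsqcup_{k}S_{k}$, and putting $a_{j}=v_{k}$ for $j\in S_{k}$ exhibits $Q=\prod_{j}a_{j}^{\lambda_{j}}$ as a point of $\Chow_{\lambda}$. (For $\lambda$ not refining $\lambda^{*}$, unique factorization in $\Sym V$ shows $[Q]\notin\Chow_{\lambda}$, so this is precisely the range in which the assertion has content, namely the interval of partitions lying between $\lambda^{*}$ and $\mu$.)

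Second, I would compute the embedded tangent space $\widehat T_{Q}\Chow_{\lambda}$ for every such $\lambda$ and observe that it does not depend on $\lambda$. Differentiating the parametrization $(b_{1},\dots,b_{s})\mapsto b_{1}^{\lambda_{1}}\cdots b_{s}^{\lambda_{s}}$ at $(a_{1},\dots,a_{s})$ gives $\widehat T_{Q}\Chow_{\lambda}=\operatorname{span}\{\,w\cdot\tfrac{Q}{a_{j}}:w\in V,\ 1\le j\le s\,\}$. Because $a_{j}=v_{k}$ for all $j\in S_{k}$, the form $\tfrac{Q}{a_{j}}$ depends only on $k$, so this span collapses to $\operatorname{span}\{\,w\cdot\tfrac{Q}{v_{k}}:w\in V,\ 1\le k\le r\,\}$, which is exactly $\widehat T_{Q}\Chow_{\lambda^{*}}$. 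Thus every Chow variety in the interval $[\lambda^{*},\mu]$ shares the same embedded tangent space at $Q$, and it suffices to show that $F$ annihilates the forms $w\cdot\tfrac{Q}{v_{k}}$ for all $w\in V$ and $1\le k\le r$.

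Third, I would feed each such generator back through the Segre tangency hypothesis. For each $k$ choose an index $i$ with $[u_{i}]=[v_{k}]$; then the Segre--Veronese tangent vector $u_{1}^{\mu_{1}}\otimes\cdots\otimes(w\,u_{i}^{\mu_{i}-1})\otimes\cdots\otimes u_{t}^{\mu_{t}}\in\widehat T_{P}\Seg_{\mu}(\PP V^{\times t})$ symmetrizes, under the projection to $S^{d}V$, to $w\cdot\tfrac{Q}{u_{i}}=w\cdot\tfrac{Q}{v_{k}}$. Since $F$ is tangent to $\Seg_{\mu}$ at $P$ it kills this tangent vector, and since $F$ is symmetric it takes the same value on every tensor symmetrizing to $w\cdot\tfrac{Q}{v_{k}}$ (the principle of Section~\ref{sec:multilinear}); hence $F\!\left(w\cdot\tfrac{Q}{v_{k}}\right)=0$ for all $w$ and all $k$. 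With the previous step this gives $\widehat T_{Q}\Chow_{\lambda}\subset[F]$ for every $\lambda$ in the interval, which is the asserted tangency for the arbitrary given $\lambda\prec\mu$ on which the membership holds.

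I expect the main obstacle to be the tangent-space bookkeeping at $Q$ rather than the final evaluation: one must verify that the embedded tangent space of each $\Chow_{\lambda}$ at the possibly singular point $Q$ is genuinely spanned by the parametrization directions $w\cdot\tfrac{Q}{v_{k}}$, so that annihilating these generators is tangency in the sense of $\widehat T_{Q}\Chow_{\lambda}\subset[F]$, and that the collapse to $\widehat T_{Q}\Chow_{\lambda^{*}}$ is uniform across the interval. For the application to Theorem~\ref{thm:main theorem} the cleanest instance is $\lambda=\lambda^{*}$, where the $v_{k}$ are distinct so that $Q$ is a smooth point of $\Chow_{\lambda^{*}}$ and the tangency immediately yields $F\in\Chow_{\lambda^{*}}^{\vee}$; the remaining finer $\lambda$ in the interval are then covered by the identical annihilation computation once the tangent-space identification above is established.
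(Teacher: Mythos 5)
Your proposal is correct, and its decisive computation is the same one the paper uses: each tangent direction $u_{1}^{\mu_{1}}\otimes\cdots\otimes (w\,u_{i}^{\mu_{i}-1})\otimes\cdots\otimes u_{t}^{\mu_{t}}$ of $\Seg_{\mu}$ symmetrizes to $w\cdot\frac{Q}{u_{i}}$, and since a symmetric $F$ takes the same value on all tensors with a common symmetrization, tangency to $\Seg_{\mu}$ at the given point forces $F\bigl(w\cdot\frac{Q}{v_{j}}\bigr)=0$ for every linear factor $v_{j}$ of $Q$, which is exactly the paper's tangency condition for $\Chow_{\lambda}(\PP V)$ at $[Q]$. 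Where you genuinely depart from the paper is in the membership bookkeeping, and your version is the more careful one. The paper opens its proof by asserting that $\lambda\prec\mu$ alone yields a symmetrization $u_{1}^{\mu_{1}}\cdots u_{t}^{\mu_{t}}=v_{1}^{\lambda_{1}}\cdots v_{s}^{\lambda_{s}}$ with each $v_{i}\in\{u_{1},\dots,u_{t}\}$; as your analysis shows, this assertion --- and with it the clause $[Q]\in\Chow_{\lambda}(\PP V)$ in the statement of the lemma --- fails at a tangency point with generic factors (already for $\mu=(1,1)$, $\lambda=(2)$: $u_{1}u_{2}\in\nu_{2}(\PP V)$ forces $u_{1}\parallel u_{2}$). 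Your $\lambda^{*}$-stratification, together with the unique-factorization argument showing $[Q]\in\Chow_{\lambda}$ if and only if $\lambda^{*}\prec\lambda$, delimits precisely the interval $\lambda^{*}\prec\lambda\prec\mu$ on which the lemma's conclusion makes sense, and this corrected form is all that the proof of Theorem~\ref{thm:main theorem} actually needs: the tangency point of a given $F$ determines $\lambda^{*}$, and at worst $\lambda=\mu$ lies in the interval, so the union over $\lambda\prec\mu$ is still covered. Your further observation that $\widehat{T}_{Q}\Chow_{\lambda}$ collapses to $\widehat{T}_{Q}\Chow_{\lambda^{*}}$ uniformly across the interval is not needed but is harmless and clarifying. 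Finally, your caveat about tangency at possibly singular points of $\Chow_{\lambda}$ is at the same level of rigor as the paper itself, which likewise treats annihilation of the parametrization derivatives $w\cdot\frac{Q}{v_{j}}$ as the definition of tangency; as you note, for the application one works with a generic $F$ and a generic tangency point, where $[Q]$ is a smooth point of $\Chow_{\lambda^{*}}$ and the conclusion $F\in\Chow_{\lambda^{*}}(\PP V)^{\vee}$ follows directly.
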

\begin{proof}
By hypothesis since $\lambda \prec \mu$, there is a symmetrization of $u_{1}^{\otimes \mu_{1}}\otimes\cdots\otimes u_{t}^{\otimes \mu_{t}}$ so that
$u_{1}^{\mu_{1}}\cdots u_{t}^{\mu_{t}} = 
v_{1}^{\lambda_{1}}\cdots v_{s}^{\lambda_{s}}$
and for all $1\leq i\leq s$, $v_{i}\in \{u_{1}, \dots , u_{t}\}$.

The conditions that $F$ be tangent to  $\Seg_{\mu}\left(\PP V^{ \times t}\right)$ at $[u_{1}^{\otimes \mu_{1}}\otimes\cdots\otimes u_{t}^{\otimes \mu_{t}}]$ are
\[\begin{matrix}
F\left( u_{1}^{\mu_{1}}\otimes\cdots\otimes u_{i-1}^{\mu_{i-1}}\otimes w\cdot u_{i}^{\mu_{i}-1}\otimes u_{i+1}^{\mu_{i+1}}\otimes\cdots\otimes  u_{t}^{\mu_{t}}\right)=0
\end{matrix}
 \]
 for $1\leq i\leq t $ and $w\in V$.  
Again we apply the ideas in Section~\ref{sec:multilinear}. Indeed 
\[ u_{1}^{\mu_{1}}\otimes\cdots\otimes u_{i-1}^{\mu_{i-1}}\otimes w\cdot u_{i}^{\mu_{i}-1}\otimes u_{i+1}^{\mu_{i+1}}\otimes\cdots\otimes  u_{t}^{\mu_{t}}\] symmetrizes to $\frac{u_{1}^{\mu_{1}}\cdots u_{t}^{\mu_{t}}}{u_{i}} w$, so
\[\begin{matrix} 
F\left(\frac{u_{1}^{\mu_{1}}\cdots u_{t}^{\mu_{t}}}{u_{i}} w \right) = 0, 
& \text{for all }1\leq i\leq p& \text{for all } w\in V.
\end{matrix}
\]
But since $u_{1}^{\mu_{1}}\cdots u_{t}^{\mu_{t}} = 
v_{1}^{\lambda_{1}}\cdots v_{s}^{\lambda_{s}}$
 and $u_{i}= v_{j}$ for some $i,j$, 
\[ F\left(\frac{u_{1}^{\mu_{1}}\cdots u_{t}^{\mu_{t}}}{u_{i}} w\right) = 
 F\left(\frac{v_{1}^{\lambda_{1}}\cdots v_{s}^{\lambda_{s}}}{v_{j}} w\right) = 0. \]
This holds for all $w\in V$, and these are the conditions that $F$ be tangent to $\Chow_{\lambda}(\PP V)$ at $[u_{1}^{\mu_{1}}\cdots u_{t}^{\mu_{t}}]=[v_{1}^{\lambda_{1}}\cdots v_{s}^{\lambda_{s}}]$ so we are done.
\end{proof}

\begin{proof}[Proof of Theorem~\ref{thm:main theorem}]
The proof of the theorem is now just the combination of Lemmas~\ref{lem:part 1} and~\ref{lem:part 2}.
Lemma~\ref{lem:part 1} showed that \[
 \bigcup_{\lambda\prec \mu} \Chow_{\lambda}\left(\PP V\right)^{\vee} \subset
\Seg_{\mu}\left(\PP V^{ \times t}\right)^{\vee}\cap \PP \left(S^{d}V^{*}\right) 
,\]
and moreover that each $\Chow_{\lambda}\left(\PP V\right)^{\vee}$ that is a hypersurface occurs with multiplicity $M_{\lambda,\mu}$.

For the other direction, apply Lemma~\ref{lem:part 2}. Suppose $F\in \Seg_{\mu}\left(\PP V^{ \times t}\right)^{\vee}\cap \PP \left(S^{d}V^{*}\right)$.  Then $F$ is a symmetric hyperplane, and moreover, $F$ must be tangent to $\Seg_{\mu}\left(\PP V^{ \times t}\right)$
 in some point $[v_{1}^{\otimes \mu_{1}}\otimes\cdots\otimes v_{t}^{\otimes \mu_{t}}]$, and tangent to 
 $\Chow_{\lambda}\left(\PP V\right)$ for every $\lambda$ such that 
 $v_{1}^{ \mu_{1}}\cdots  v_{t}^{ \mu_{t}} \in \Chow_{\lambda}(\PP V)$ and more specifically for every $\lambda \prec \mu$.  This means that $F \in \Chow_{\lambda}\left(\PP V\right)^{\vee}$ for such $\lambda$, and therefore\[
\Seg_{\mu}\left(\PP V^{ \times t}\right)^{\vee}\cap \PP \left(S^{d}V^{*}\right) \subset \bigcup_{\lambda\prec \mu} \Chow_{\lambda}\left(\PP V\right)^{\vee}
.\qedhere\]
\end{proof}

Theorem~\ref{thm:hyp} is a specific case of Theorem~\ref{thm:main theorem}, we only need to note that $M_{\lambda,\lambda} = \binom{d}{\lambda}$ is the binomial coefficient (see Section~\ref{sec:combinat}).
Corollary~\ref{cor:degree} also follows from Theorem~\ref{thm:main theorem}.  This is because in Section~\ref{sec:combinat} we also showed that the multiplicities $M_{\lambda,\mu}$ can be both computed and organized in a lower triangular matrix.  Using the generating function for $D_{\mu}=\deg(\Seg_{\mu}(\PP V^{ \times t}))$, found \cite[Theorem~3.1, Proposition~ 3.2]{GKZ92} or also \cite[page 454]{GKZ}, we can compute the vector of degrees $(D_{\mu})_{\mu}$. Therefore we can solve the linear system $(D_{\mu})_{\mu}=(M_{\lambda,\mu})_{\lambda,\mu} (d_{\lambda})_{\lambda}$, where $d_{\lambda}$ denotes the degree of $\Chow_{\lambda}(\PP V^{ \times s})$. For this we use Proposition~\ref{mark} to compute $M_{\lambda,\mu}$ efficiently.  See the appendix for a few examples.

\subsection{A degree formula in the binary case}
\begin{theorem}
Suppose $V = \CC^{2}$. Let $\lambda = (1^{m_{1}},2^{m_{2}}, \ldots , p^{m_{p}})$, with $m= \sum_{i} m_{i}$ and suppose $m_{1}=0$.
  The degree of $\Chow_{\lambda}(\PP^{1})^{\vee}$
  is
  \begin{equation}\label{eq:bindeg}
(m+1)
\binom{m}{m_{2}, \ldots , m_{p}}1^{m_{2}}2^{m_{3}}\cdots (p-1)^{m_{p}} 
\end{equation}
\end{theorem}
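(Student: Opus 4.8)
The plan is to reduce to the degrees of the binary $\mu$-discriminants and then invert the triangular system of Corollary~\ref{cor:degree} by passing to symmetric functions. Throughout write $D_\mu := \deg(\Delta_{\mu,2})$, let $p_k$ and $h_k$ be the power-sum and complete-homogeneous symmetric functions, $z_\mu = \prod_k k^{m_k}m_k!$, and set $P_1 = \sum_{k\ge 1}p_k$, $L = \sum_{k\ge 1}p_k/k$. Recall that $M_{\lambda,\mu}$ — the number of groupings of the parts of $\mu$ into blocks summing to the parts of $\lambda$ — is exactly the coefficient of $m_\lambda$ in the monomial expansion of $p_\mu$. Hence, via the Hall pairing $\langle m_\lambda,h_\nu\rangle=\delta_{\lambda\nu}$, the system $D_\mu=\sum_\lambda d_\lambda M_{\lambda,\mu}$ of Corollary~\ref{cor:degree} is equivalent to the single identity
\[
\sum_\lambda d_\lambda\,h_\lambda \;=\; \sum_\mu \frac{D_\mu}{z_\mu}\,p_\mu .
\]
Using $h_\lambda=\prod_{i\ge 2}h_i^{m_i}$ and the multinomial theorem, the claimed formula is exactly the assertion that the left-hand side equals $\sum_{m\ge 0}(m+1)R^m=(1-R)^{-2}$ with $R=\sum_{i\ge 2}(i-1)h_i$. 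A short computation with $H(z)=\sum_n h_n z^n=\exp(\sum_k p_k z^k/k)$ gives $R=H'(1)-H(1)+1=e^L P_1-e^L+1$, i.e. $1-R=e^L(1-P_1)$; so the theorem is equivalent to the symmetric-function identity
\[
\sum_\mu \frac{D_\mu}{z_\mu}\,p_\mu \;=\; e^{-2L}(1-P_1)^{-2}.
\]

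First I would compute $D_\mu$. Since all factors have the same dimension, $\Seg_\mu(\PP V^{\times t})$ with $V=\CC^2$ is a smooth embedded variety, so — in contrast with the Chow variety, whose obvious resolution $\prod_i \Sym^{m_i}\PP^1\to\Chow_\lambda(\PP^1)$ is ramified along the loci where roots of distinct multiplicities collide — its dual degree is given directly by the class formula $D_\mu=\int_{(\PP^1)^t}(1+\ell)\,c(\Omega^1\otimes\O(1))$. Writing $\eta_i$ for the hyperplane classes ($\eta_i^2=0$) and $\ell=\sum_i\mu_i\eta_i$, the Chern roots of the tangent bundle are the $2\eta_i$, so
\[
D_\mu=[\eta_1\cdots\eta_t]\,(1+\ell)\prod_{i=1}^t(1+\ell-2\eta_i).
\]
Extracting the top multilinear coefficient and simplifying expresses $D_\mu$ through permanents of $B=\mathbf 1\mu^{\mathsf T}-2I$: one finds $D_\mu=(t+1)\operatorname{perm}(B)+2\sum_k\operatorname{perm}(B')$, where $B'$ is the principal submatrix omitting row and column $k$. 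In permutation language a fixed point of size $k$ is weighted $k-2$, a nontrivial cycle is weighted by the product of the sizes it meets, and the $(1+\ell)$-factor produces the marked-row terms.

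The crux is to evaluate $\sum_\mu(D_\mu/z_\mu)p_\mu$ by the exponential formula. The weight $1/z_\mu$ forces each part of size $k$ to carry the atomic weight $p_k/k$; with this normalization the connected (single-cycle) contributions are $\sum_k(k-2)p_k/k=P_1-2L$ from fixed points and $\tfrac1r P_1^{\,r}$ from $r$-cycles with $r\ge 2$, whence
\[
\Sigma_1:=\sum_\mu\frac{\operatorname{perm}(B)}{z_\mu}\,p_\mu=\exp\!\bigl(-2L-\log(1-P_1)\bigr)=e^{-2L}(1-P_1)^{-1}.
\]
The marked-row terms are then handled by the part-counting operator $E=\sum_k p_k\,\partial_{p_k}$, which gives $E\Sigma_1=\Sigma_1(-2L+P_1/(1-P_1))$, together with a pointing-and-removal identity yielding $\sum_\mu(p_\mu/z_\mu)\sum_k\operatorname{perm}(B')=L\,\Sigma_1$. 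Adding the pieces, the terms $\mp 2L$ cancel and $\sum_\mu(D_\mu/z_\mu)p_\mu=\Sigma_1(1-P_1)^{-1}=e^{-2L}(1-P_1)^{-2}$, which is the required identity.

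The main obstacle is the bookkeeping in this last step: choosing the atomic weight $p_k/k$ so that the exponential formula computes the $z_\mu$-weighted sum (rather than the naive exponential-generating-function sum), correctly separating the fixed-point and cycle weights read off from the permanent, and checking that the extra factor $(1+\ell)$ assembles into the clean operators $E$ and pointing-removal. A secondary point to state carefully is the legitimacy of the smooth Segre-Veronese computation as against a direct resolution of the singular Chow variety; the difference is precisely the ramification of $\prod_i\Sym^{m_i}\PP^1\to\Chow_\lambda(\PP^1)$ noted above, which would otherwise contribute spurious terms to the dual degree.
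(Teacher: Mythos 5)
Your proposal is correct, and it reaches \eqref{eq:bindeg} by a genuinely different route than the paper. The shared skeleton is Corollary~\ref{cor:degree}: since the triangular system $\deg(\Delta_{\mu,2})=\sum_{\lambda\prec\mu}d_\lambda M_{\lambda,\mu}$ has a unique solution (with $d_\lambda=0$ for $m_1>0$ forced by Theorem~\ref{thm:hypersurface}), it suffices to exhibit one. The paper exhibits a solution by quoting the GKZ hyperdeterminant degree formula \cite[Theorem~XIV.2.5]{GKZ}, a sum over partitions weighted by Gale--Ryser numbers $d_{\kappa,\lambda}$, and matching it against the system using $d_{(1^{r}),\lambda}=M_{\lambda,(1^{r})}$; note that this matching is only literal at $\kappa=(1^{d})$, since for other formats the hyperdeterminant degree is \emph{not} $\deg(\Delta_{\kappa,2})$ (e.g.\ $\deg(\Delta_{(2,2),2})=6\cdot 1+3\cdot 2=12$ by Corollary~\ref{cor:degree}, while the $3\times 3$ hyperdeterminant is the determinant, of degree $3$). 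You instead verify the system at \emph{every} $\mu$, with all inputs computed from scratch: each $D_\mu$ via Katz's class formula on the smooth $(\PP^1)^{\times t}$ (legitimate since these duals are always hypersurfaces), and the whole system repackaged through the Hall pairing as $\sum_\lambda d_\lambda h_\lambda=\sum_\mu (D_\mu/z_\mu)\,p_\mu$ --- which is, incidentally, the correctly transposed form of Theorem~\ref{thm:degs}, consistent with $M_{\lambda,\mu}$ being the coefficient of $m_\lambda$ in $p_\mu$. I checked your load-bearing identities and they hold: $D_\mu=(t+1)\operatorname{perm}(B)+2\sum_k\operatorname{perm}(B_{\hat k,\hat k})$ with $B=\mathbf{1}\mu^{\mathsf T}-2I$ (expand $\operatorname{perm}(B)$ down column $k$ to absorb the marked-row terms); the cycle-weighted exponential formula gives $\Sigma_1=\exp\bigl(P_1-2L+\sum_{r\geq 2}P_1^{r}/r\bigr)=e^{-2L}(1-P_1)^{-1}$; $E\Sigma_1=\Sigma_1\bigl(-2L+P_1/(1-P_1)\bigr)$; the deletion term is $L\Sigma_1$; and the total is $e^{-2L}(1-P_1)^{-2}=(1-R)^{-2}$, whose $h_\lambda$-coefficients are exactly \eqref{eq:bindeg} and vanish when $m_1>0$. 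What each approach buys: the paper's proof is a few lines given GKZ's (hard) theorem, but it really only checks the single equation at $(1^{d})$; yours is self-contained modulo the class formula and standard symmetric-function calculus, avoids Gale--Ryser numbers and that gap entirely, and yields as a byproduct the closed generating function $\sum_\mu(D_\mu/z_\mu)p_\mu=e^{-2L}(1-P_1)^{-2}$ for all binary $\mu$-discriminant degrees, a clean companion to Proposition~\ref{prop:gf}.
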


\begin{proof}
Consider the hyperdeterminant of format $\kappa = (k_{1},\dots,k_{r})$, which by \cite[Theorem~XIV.2.5]{GKZ} has degree:
\[
\deg(\Seg(\PP^{k_{1}}\times \dots \times \PP^{k_{r}})^{\vee}) = 
\sum_{\lambda} 
(m_{2}+m_{2}+\dots+m_{p} +1)! \cdot d_{\kappa,\lambda}
\cdot \prod_{i=2}^{p}\frac{(i-1)^{m_{i}}}{m_{i}!}
,\]
where  the sum is over $\lambda =(1^{m_{1}},2^{m_{2}},\dots,p^{m_{p}} )$ with $m_{1}=0$, $\kappa = (k_{1},\dots,k_{r})$, and $d_{\kappa,\lambda}$ is the Gale-Ryser matrix (whose $\kappa, \lambda$ entry corresponds to the number of $0$-$1$ matrices with row sums $\kappa$ and column sums $\lambda$).

By the Fundamental Theorem of Algebra, 
\[
\Seg(\PP^{k_{1}}\times \dots \times \PP^{k_{r}}) \cong 
\Seg_{\kappa}(\PP^{1}\times \dots \times \PP^{1})
,\]
so 
$\deg(\Seg(\PP^{k_{1}}\times \dots \times \PP^{k_{r}})^{\vee}) = \deg(\Delta_{\kappa,1})$.

Let $m=\sum_{i}m_{i}$, $m_{1}=0$ 
and compare to the formula given by Corollary~\ref{cor:degree}:
\begin{equation}\label{bla}
\deg(\Delta_{ \kappa, 1}) = 
\sum_{\lambda \prec \kappa, \lambda\vdash d, \kappa \vdash d} 
\deg(\Chow_{\lambda}\PP^{1})^{\vee}\cdot M_{\lambda,\kappa}
\end{equation}
\[=
\sum_{\lambda} 
(m_{2}+m_{2}+\dots+m_{p} +1)! \cdot d_{\kappa,\lambda}
\cdot \prod_{i=2}^{p}\frac{(i-1)^{m_{i}}}{m_{i}!}
.\]
\[=
\sum_{\lambda } 
(m+1)\binom{m}{m_{2},\dots,m_{p}} 
\cdot \prod_{i=2}^{p}(i-1)^{m_{i}}\cdot d_{\kappa,\lambda}
.\]

The claim is proved noting that $d_{(1^{r}),\lambda}=M_{\lambda,(1^{r})}$ (see Proposition~\ref{prop:M properties}), and that 
\[(m+1)\binom{m}{m_{2},\dots,m_{p}} 
\cdot \prod_{i=2}^{p}(i-1)^{m_{i}} = \deg(\Chow_{\lambda}\PP^{1})^{\vee}\] provides a solution to the system of equations given by varying $\kappa$ in \eqref{bla}, but the solution is unique by Corollary~\ref{cor:degree}.
\end{proof}
While $d_{(1^{r}),\lambda}=M_{\lambda,(1^{r})}$, in general $d_{\kappa,\lambda}\geq M_{\lambda,\kappa}$, (the number of partitions that dominate a given partition is more than the number of partitions that refine it) so expressing $\deg(\Delta_{\kappa,1})$ as an expression involving the $d_{\kappa,\lambda}$ instead of the $M_{\lambda,\kappa}$  will involve a possibly different combination of degrees of duals of Chow varieties. 

The degree formula in the binary case is the same as that of a resultant $R_{f_{0},\dots,f_{m}}$ of type $(m_{2},\dots,m_{p};1,2,\dots,p-1)$.
So another proof strategy would be to find a way to relate this dual variety to a resultant whose degree is equal to the degree we have written above.  This can be done, and essentially only relies on the Fundamental Theorem of Algebra, but for brevity we omit it.

\subsection{A generating function for the degree of dual of a Chow variety}\label{sec:computation}
Utilizing the expression in Proposition~\ref{mark} relating power-sum symmetric functions and monomial symmetric functions, we can improve Theorem~\ref{thm:main theorem}, and provide a generating function for the degree of the duals of the Chow varieties (when they are hypersurfaces).

\begin{theorem}\label{thm:degs}
Suppose $\dim V \geq 2$.  Let $d_{\lambda}$ denote $ \deg(\Chow_{\lambda}\left(\PP V\right)^{\vee}) $ when it is a hypersurface and $0$ otherwise.  Let $\Delta_{ \mu, n}$ denote the equation of the hypersurface $\Seg_{\mu}(\PP V^{\times t})^{\vee}$.
The degrees $d_{\lambda}$ are computed by
\[
\sum_{\mu}\deg(\Delta_{ \mu, n})m_{\mu}(x) = \sum_{\lambda } d_{\lambda}p_{\lambda}(x)
,\]
where $m_{\mu}$ and $p_{\lambda}$ are respectively the monomial and power sum symmetric functions.
\end{theorem}

\begin{proof}
By Corollary~\ref{cor:degree} we have 
\[
\deg(\Delta_{ \mu, n}) = \sum_{\lambda \prec \mu} d_{\lambda}M_{\lambda,\mu}
.\]
Multiply by the monomial symmetric functions $m_{\mu}(x)$ on both sides and sum over all partitions $\mu$, to get
\[
\sum_{\mu\vdash d}\deg(\Delta_{ \mu, n})m_{\mu}(x) = \sum_{\mu\vdash d}\sum_{\lambda\vdash d} d_{\lambda}M_{\lambda,\mu}m_{\mu}(x)
.\]
Change the order of summation and apply Proposition~\ref{mark}:
\[
\sum_{\mu\vdash d}\deg(\Delta_{ \mu, n})m_{\mu}(x) = \sum_{\lambda\vdash d}d_{\lambda}\sum_{\mu\vdash d} M_{\lambda,\mu}m_{\mu}(x) = 
\sum_{\lambda\vdash d}d_{\lambda}p_{\lambda} (x)
.\qedhere\]
\end{proof}
Theorem~\ref{thm:degs} can also provide an alternate proof of Theorem~\ref{thm:hypersurface} because it predicts $d_{\lambda}=0$ precisely when $\Chow_{\lambda}(\PP V)^{\vee}$ is not a hypersurface.

Theorem~\ref{thm:degs} gives an effective way to compute the degrees of the duals of the Chow varieties because we have a generating function for the degree of the $\mu$-discriminant given by \cite[Theorem~XIII.2.4 p.441]{GKZ}.
Combining the GKZ generating function with Theorem~\ref{thm:degs}, we can give a generating function for the degrees of the duals of Chow varieties.

\begin{prop}\label{prop:gf}
Let $d_{\lambda}$ be the degree of $\Chow_{\lambda}(\PP V^{\times s})^{\vee}$ and extend $d_{\lambda}$ to $d_{\kappa,\lambda} = \begin{cases}d_{\lambda} &\mbox{ if }\kappa = (n^{t}) \\ 0 & \mbox{ else} \end{cases}$. 
Then 
\[
\sum_{\kappa}\sum_{\lambda\vdash d}d_{\kappa,\lambda}p_{\lambda}(x) z^{\kappa}
= 
\sum_{\mu\vdash d}
\frac
{1}
{\left[\prod_{i}(1+z_{i}) - \sum_{j}\mu_{j}z_{j}\prod_{i\neq j}(1+z_{i})\right]^{2}}
m_{\mu}(x).
\]
\end{prop}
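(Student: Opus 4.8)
The plan is to read the right-hand side as the GKZ generating function for the degrees of duals of Segre-Veronese varieties and then to feed it through Theorem~\ref{thm:degs} using the change of basis of Proposition~\ref{mark}. First I would recall from \cite[Theorem~XIII.2.4]{GKZ} that, for a fixed multidegree $\mu = (\mu_1,\dots,\mu_t)$, the degree of the dual of the Segre-Veronese variety of format $\kappa=(k_1,\dots,k_t)$ embedded by $\O(\mu)$ is exactly the coefficient of $z^\kappa=\prod_i z_i^{k_i}$ in
\[
\frac{1}{\left[\prod_i(1+z_i)-\sum_j \mu_j z_j\prod_{i\neq j}(1+z_i)\right]^2},
\]
so that the balanced coefficient, namely that of $z^{(n^t)}$, recovers $\deg(\Delta_{\mu,n})$, the degree of the $\mu$-discriminant on $\PP V$ with $\dim V=n$ and $t=\#\mu$ factors. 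I would pin down the normalization on the two extreme cases: for $\mu=(1^t)$ the bracket collapses to $1-\sum_{i\geq 2}(i-1)e_i(z)$, reproducing the hyperdeterminant generating function, and for $\mu=(d)$ it becomes $1-(d-1)z_1$, whose balanced coefficient is Boole's $n(d-1)^{n-1}=\deg(\Delta_{(d),n})$. These checks fix the correspondence between the bracket and $\deg(\Delta_{\mu,n})$.

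Granting this, I would substitute the series into the right-hand side and interchange the (in each degree finite) summations to write it as $\sum_\kappa z^\kappa\bigl(\sum_\mu \deg(\Delta_{\mu;\kappa})\,m_\mu(x)\bigr)$. The next step is to restrict to the balanced formats $\kappa=(n^t)$: there the inner sum is $\sum_\mu \deg(\Delta_{\mu,n})\,m_\mu(x)$, which is precisely the left-hand side of Theorem~\ref{thm:degs}. That theorem rewrites it as $\sum_\lambda d_\lambda\,p_\lambda(x)$, and comparing with the defining rule $d_{(n^t),\lambda}=d_\lambda$ (and $0$ otherwise) identifies the result with the balanced part of $\sum_\kappa\sum_\lambda d_{\kappa,\lambda}\,p_\lambda(x)\,z^\kappa$. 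Chaining this identification over all $n$ (equivalently, over all balanced $\kappa$) would give the claimed equality.

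The main obstacle, and the step I would treat most carefully, is the bookkeeping of the generating-function variables, in particular reconciling the number of tensor factors $t=\#\mu$ with the number of active $z$-variables. Each $\mu$-summand naturally lives in only $\#\mu$ variables, so one must be precise about how these series sit inside a common ring $\C[[z_1,z_2,\dots]]$ and exactly which monomial extraction implements ``all factors equal to $\PP V$''; a careless single extraction $[z^{(n^t)}]$ with $t$ fixed will miss contributions of $\mu$ with $\#\mu\neq t$, which are exactly the cross terms produced when passing from the monomial to the power-sum basis. Thus the real content is that the \emph{balanced diagonal} of the GKZ series, after conversion to the $p_\lambda(x)$ basis via Proposition~\ref{mark}, is organized precisely by the degrees $d_\lambda$ of the duals of the Chow varieties. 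Once the diagonal extraction is arranged so that it reproduces $\deg(\Delta_{\mu,n})$ for every $\mu$ simultaneously, the remainder is the formal manipulation above combined with Theorem~\ref{thm:degs} and Proposition~\ref{mark}.
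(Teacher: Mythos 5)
Your proposal is correct and takes essentially the same route as the paper's own proof: invoke the GKZ generating function for the degrees $N(\kappa;\mu)$ of duals of Segre--Veronese varieties, extract the balanced coefficient $\langle z_{1}^{n}\cdots z_{t}^{n}\rangle$ to recover $\deg(\Delta_{\mu,n})$, then multiply by $m_{\mu}(x)$, sum over $\mu\vdash d$, and pass to the power-sum basis via Theorem~\ref{thm:degs} and Proposition~\ref{mark}. Your extra attention to reconciling the number of $z$-variables with $\#\mu$ flags a bookkeeping subtlety that the paper's proof passes over silently, but it does not alter the argument.
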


\begin{proof}
From \cite[Theorem~XIII.2.4 p.441]{GKZ} we have
\[
\sum_{\kappa}N(\kappa;\mu)z^{\kappa}
=\frac
{1}
{\left[\prod_{i}(1+z_{i}) - \sum_{j}\mu_{j}z_{j}\prod_{i\neq j}(1+z_{i})\right]^{2}}
,\]
where $N(\kappa;\mu)$ is the degree of $\Seg_{\mu}(\PP^{k_{1}}\times\dots\times\PP^{k_{t}})^{\vee}$ and $\kappa \in \ZZ^{t}_{>0}$. Since we only care about the hyperdeterminants where the dimensions $k_{i}$ are all equal, i.e.
\[
N(n,\dots,n;\mu_{1},\dots,\mu_{t}) = \deg(\Delta_{\mu,n})
,\]
we consider the coefficient of $z_{1}^{n}\dots z_{t}^{n}$ on both sides. We denote by $\left\langle z_{1}^{n}\dots z_{t}^{n} \right\rangle$ the operation ``take the coefficient of $z_{1}^{n}\dots z_{t}^{n}$''.

Multiply by $m_{\mu}(x)$, sum over all $\mu\vdash d$, and apply Theorem~\ref{thm:degs}
\[
\sum_{\lambda\vdash d}d_{\kappa,\lambda}p_{\lambda}(x)=
\sum_{\mu\vdash d}
N(n,\dots,n;\mu_{1},\dots,\mu_{t}) m_{\mu}(x)
\]
\[=\left\langle z_{1}^{n}\dots z_{t}^{n}\right\rangle\sum_{\mu\vdash d}
\frac
{1}
{\left[\prod_{i}(1+z_{i}) - \sum_{j}\mu_{j}z_{j}\prod_{i\neq j}(1+z_{i})\right]^{2}}
m_{\mu}(x)
,\] and this implies the result.
\end{proof}

\subsection{Examples}
We can use Proposition~\ref{prop:gf} to compute the degrees of the duals to the Chow varieties explicitly.  We found it convenient to separately apply
Theorem~\ref{thm:degs} and Proposition~\ref{mark} to do the same computation.  We have included our Maple code that does this in the ancillary files to the arXiv version of this paper.

Let $d_{\lambda}$ denote the degree of $\Xi_{\lambda}$, and let $D_{\mu}$ denote the degree of $\Delta_{ \mu, n}$.
Consider the case of octic curves, $d=8$ and $n=2$.  Using the GKZ generating function, we find that 
\begin{multline*}
(D_{(8)}, D_{(6,2)}, D_{(5,3)}, D_{(4,4)}, D_{(4,2,2)} , D_{(3,3,2)} , D_{(2,2,2,2)}, D_{(1^{8})} ) 
\\=
(14, 44, 62,68 , 200 , 236, 848, 60032)
.\end{multline*}
The unique solution to $M_{\lambda,\mu}d_{\lambda} =D_{\mu}$ is
\[
(d_{(8)}, d_{(6,2)}, d_{(5,3)}, d_{(4,4)}, d_{(4,2,2)}, d_{(3,3,2)}, d_{(2,2,2,2)}) =
(14,30, 48, 27, 36, 48, 5)
. \]
Notice that $d_{(2,2,2,2)}=5$ is a relic of the fact that $\Chow_{(2,2,2,2)}(\PP^{1})^{\vee}$ is the hypersurface given by the determinant of the $5\times5$  catalecticant (Hankel) matrix of partial derivatives. 
To a binary form $a_{{8},0} x^{8}+a_{{7},1} x^{7} y+a_{{6},{2}} x^{6} y^{2}+a_{{5},{3}} x^{5} y^{3}+a_{{4},{4}} x^{4} y^{4}+a_{{3},{5}} x^{3} y^{5}+a_{{2},{6}} x^{2}
      y^{6}+a_{1,{7}} x y^{7}+a_{0,{8}} y^{8}$ the associated Hankel matrix is
\[ \bgroup\begin{pmatrix}a_{{8},0}&
      a_{{7},1}&
      a_{{6},{2}}&
      a_{{5},{3}}&
      a_{{4},{4}}\\
      a_{{7},1}&
      a_{{6},{2}}&
      a_{{5},{3}}&
      a_{{4},{4}}&
      a_{{3},{5}}\\
      a_{{6},{2}}&
      a_{{5},{3}}&
      a_{{4},{4}}&
      a_{{3},{5}}&
      a_{{2},{6}}\\
      a_{{5},{3}}&
      a_{{4},{4}}&
      a_{{3},{5}}&
      a_{{2},{6}}&
      a_{1,{7}}\\
      a_{{4},{4}}&
      a_{{3},{5}}&
      a_{{2},{6}}&
      a_{1,{7}}&
      a_{0,{8}}\\
      \end{pmatrix}\egroup
,\]
and the determinant of this matrix gives the degree 5 hypersurface associated to the (closure of) forms that are sums of four 8th powers. The analogous feature propagates to all $(\Chow_{(2,2,\dots,2)} \PP^{1})^\vee$.

Proceeding in the same way, in the case $d=4, n=3$ we find
\[(D_{(4)}  , D_{(3,1)} , D_{(2,2)}, D_{(2,1,1)}, D_{(1^{4})})
=
(27, 27, 129, 225, 1269).
\]
and 
the unique solution to $M_{\lambda,\mu}d_{\lambda} =D_{\mu}$ is
\[
( d_{(4)}  , d_{(2,2)} , d_{(2,1,1)},  d_{(1^{4})}) =
( 27, 51, 48, 15)
 .\]

Finally for $d=5, n=4$, we have
\begin{multline*}
(D_{(5)} , D_{(4,1)} , D_{(3,2)} , D_{(3,1,1)}, D_{(2,2,1)}, D_{(2,1^{3})} , D_{(1^{5})})\\
= (48, 48, 360, 576, 1440, 7128, 68688),\end{multline*}
and the unique solution to $M_{\lambda,\mu}d_{\lambda} =D_{\mu}$ is
\[
( d_{(5)}  , d_{(3,2)} , d_{(3,1,1)}, d_{(2,2,1)}, d_{(2,1^{3})}, d_{(1^{5})}) =
( 48, 312, 108, 384, 480, 192)
 .\]
To produce more examples, we are only limited by our ability to handle more coefficients of larger power series.

\section*{Acknowledgements}  We would like to thank G. Ottaviani for suggesting this work and for the many useful discussions and encouragement, as well as for his hospitality while the author was a post-doc under his supervision at the University of Florence. Part of the research for this work was done while visiting the Mittag-Leffler Institute. We would like to thank the participants for useful discussions and for the stimulating environment made by their passion for mathematics. J.M. Landsberg, C. Peterson, K. Ranestad, J. Buczynski, and F. Block also provided particularly  useful discussions.  We thank Mark Haiman for pointing out Proposition~\ref{mark}.  We also thank the anonymous referee for useful comments.

\bibliographystyle{amsalpha}
\bibliography{hyperdet_bib}

\end{document}